\theoremstyle{plain}
\newtheorem{theorem}{Theorem}[section]
\newtheorem{lemma}[theorem]{Lemma}
\newtheorem{proposition}[theorem]{Proposition}
\newtheorem{corollary}[theorem]{Corollary}
\theoremstyle{remark}
\newtheorem{remark}[theorem]{Remark}
\theoremstyle{definition}
\newtheorem{example}[theorem]{Example}
\newcommand{\cS}{\mathcal{S}}
\newcommand{\Z}{\mathbb{Z}}
\newcommand{\R}{\mathbb{R}}
\newcommand{\C}{\mathbb{C}}
\newcommand{\tmu}{\tilde\mu}
\newcommand{\T}{\mathbb{T}}
\newcommand{\tw}{\textcolor{black}}
\def\blfootnote{\gdef\@thefnmark{}\@footnotetext}
\DeclareMathOperator{\linspan}{{\mathrm{span}}}
\DeclareMathOperator{\diag}{{\mathrm{diag}}}
\DeclareMathOperator{\argmin}{{\mathrm{argmin}}}
\renewcommand{\d}[1][x]{\,\operatorname{d}\!#1}
\newcommand{\ddt}{\frac{\d[]}{\d[t]}}
\newcommand{\ddb}{\frac{\partial}{\partial b}}
\newcommand{\ip}[2]{\langle {#1},\, {#2} \rangle}
\newcommand{\norm}[1]{\| {#1} \|}
\DeclareMathOperator{\trace}{Tr}
\renewcommand{\AA}{\mathbf{A}}
\newcommand{\BB}{\mathbf{B}}
\newcommand{\CC}{\mathbf{C}}
\newcommand{\DD}{\mathbf{D}}
\newcommand{\EE}{\mathbf{E}}
\newcommand{\II}{\mathbf{I}}
\newcommand{\PP}{\mathbf{P}}
\newcommand{\VV}{\mathbf{V}}
\newcommand{\WW}{\mathbf{W}}
\numberwithin{equation}{section} 
\begin{document}

\title[On optimal decay estimates for ODEs and PDEs]{On optimal decay estimates for ODEs and PDEs with modal decomposition}

\author{Franz Achleitner}
\address{Faculty of Mathematics, University of Vienna,\\ Oskar-Morgenstern-Platz~1, 1090 Vienna, Austria}
\email{franz.achleitner@univie.ac.at}

\author{Anton Arnold}
\address{TU Wien, Institute for Analysis and Scientific Computing, Wiedner Hauptstrasse 8-10, 1040 Vienna, Austria}
\email{anton.arnold@tuwien.ac.at}

\author{Beatrice Signorello}
\address{TU Wien, Institute for Analysis and Scientific Computing, Wiedner Hauptstrasse 8-10, 1040 Vienna, Austria}
\email{beatrice.signorello@tuwien.ac.at}

\begin{abstract}
We consider the Goldstein-Taylor model, which is a 2-velocity BGK model,
 and construct the ``optimal'' Lyapunov functional to quantify the convergence to the unique normalized steady state. 
The Lyapunov functional is optimal in the sense that
 it yields decay estimates in $L^2$-norm with the sharp exponential decay rate and minimal multiplicative constant.
The modal decomposition of the Goldstein-Taylor model leads to the study of a family of 2-dimensional ODE systems.
Therefore we discuss the characterization of ``optimal'' Lyapunov functionals for linear ODE systems with positive stable diagonalizable matrices.
We give a complete answer for \tw{optimal decay rates of} 2-dimensional ODE systems, 
 and a partial answer for higher dimensional ODE systems.
\end{abstract}

\keywords{Lyapunov functionals, sharp decay estimates, Goldstein-Taylor model}

\thanks{All authors were supported by the FWF-funded SFB \#F65. 
The second author was partially supported by the FWF-doctoral school W1245 ``Dissipation and dispersion in nonlinear partial differential equations''.
We are grateful to the anonymous referee who led us to better distinguish the different cases studied in \S3 and \S4.}

\maketitle


\section{Introduction}

This note is concerned with optimal decay estimates of hypocoercive evolution equations that allow for a modal decomposition. 
The notion \emph{hypocoercivity} was introduced by Villani in \cite{ViH06} for equations of the form $\ddt{f}=-Lf$ on some Hilbert space $H$, 
where the generator $L$ is not coercive, but where solutions still exhibit exponential decay in time. 
More precisely, there should exist constants $\lambda>0$ and $c\ge1$, such that
\begin{equation}\label{exp-decay}
  \|e^{-Lt} f^I\|_{\widetilde H} \le c\,e^{-\lambda t} \|f^I\|_{\widetilde H}\qquad  \forall\,f^I\in \widetilde H\,,
\end{equation}
where $\widetilde H$ is a second Hilbert space, densely embedded in $(\ker L)^\perp\subset H$.

The large-time behavior of many hypocoercive equations have been studied in recent years, 
including Fokker-Planck equations \cite{ViH06, ArEr14, AAS}, kinetic equations \cite{DMS15} and BGK equations \cite{AAC16, AAC17}. 
Determining the sharp (i.e. maximal) exponential decay rate $\lambda$ was an issue in some of these works, in particular \cite{ArEr14, AAC16, AAC17}. 
But finding at the same time the smallest multiplicative constant $c\ge1$, is so far an open problem. 
And this is the topic of this note. 
For simple cases we shall describe a procedure to construct the ``optimal'' Lyapunov functional that will imply \eqref{exp-decay} with the sharp constants $\lambda$ and $c$.

For illustration purposes we shall focus here only on the following 2-velocity BGK-model (referring to the physicists Bhatnagar, Gross and Krook \cite{BGK}) for the two functions $f_\pm(x,t)\geq 0$ on the one-dimensional torus $x\in\T$ and for $t\ge0$. It reads
\begin{equation} \label{bgk}
  \begin{cases}
\partial_t f_+ &= - \partial_x f_+ + \frac12(f_--f_+)\,,\\
\partial_t f_- &= \partial_x f_- - \frac12(f_--f_+)\,.
\end{cases}
\end{equation}
This system of two transport-reaction equations is also called \emph{Goldstein-Taylor model}.

For initial conditions normalized as $\int_0^{2\pi} \left[f^I_+(x)+f^I_-(x)\right]\d[x]=2\pi$, 
the solution $f(t)=(f_+(t),f_-(t))^\top$ converges to its unique (normalized) steady state with $f_+^\infty=f_-^\infty=\frac12$. 
\tw{The operator norm of the propagator for~\eqref{bgk} 
 can be computed explicitly from the Fourier modes, see~\cite{MiMo2013}. By contrast, the goal of this paper and of \cite{AAC16, DMS15} is to refrain from explicit computations of the solution and to use Lyapunov functionals instead. Following this strategy, }
an explicit exponential decay rate of this two velocity model 
was shown in \cite[\S 1.4]{DMS15}. 
The sharp exponential decay estimate was found in \cite[\S 4.1]{AAC16} via a refined functional, 
yielding the following result:
\begin{theorem}[{\cite[Th. 6]{AAC16}}]\label{th:bgk}
Let $f^I\in L^2(0,2\pi;\R^2)$. Then the solution to \eqref{bgk} satisfies
\begin{equation*} 
  \|f(t)-f^\infty\|_{L^2(0,2\pi;\R^2)} \le c\,e^{-\lambda t}\|f^I-f^\infty\|_{L^2(0,2\pi;\R^2)}\,, \qquad t\ge0\,,
\end{equation*}
with the optimal constants $\lambda=\frac12$ and $c=\sqrt3$.
\end{theorem}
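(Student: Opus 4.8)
The plan is to exploit the modal decomposition of the Goldstein-Taylor system \eqref{bgk}. Writing $u(x,t)=f(x,t)-f^\infty$ and expanding each component in a Fourier series on $\T$, $u_\pm(x,t)=\sum_{k\in\Z} \hat u_\pm(k,t)\,e^{\ii k x}$, the system decouples into a family of $2$-dimensional complex ODE systems, one for each mode $k$. By the normalization $\int_0^{2\pi}(f_+^I+f_-^I)\,dx=2\pi$ the $k=0$ mode of $\hat u_++\hat u_-$ vanishes, so effectively each mode contributes a $2\times 2$ system $\ddt \hat u(k,t)=-\mathbf{C}_k\,\hat u(k,t)$ with
\begin{equation*}
  \mathbf{C}_k = \begin{pmatrix} \ii k + \tfrac12 & -\tfrac12 \\[2pt] -\tfrac12 & -\ii k + \tfrac12 \end{pmatrix},
\end{equation*}
after an appropriate change to symmetric/antisymmetric variables. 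By Parseval's identity, $\|u(t)\|_{L^2(0,2\pi;\R^2)}^2 = 2\pi\sum_{k}|\hat u(k,t)|^2$, so the claimed estimate \eqref{exp-decay} with $\lambda=\tfrac12$, $c=\sqrt3$ reduces to the \emph{uniform-in-$k$} bound $|e^{-\mathbf{C}_k t}\hat u(k,0)| \le \sqrt3\,e^{-t/2}|\hat u(k,0)|$ for all $k\in\Z$, together with the endpoint cases; the constant $c=\sqrt 3$ must be attained in the supremum over $k$.

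First I would compute the spectrum of $\mathbf{C}_k$: the eigenvalues are $\mu_\pm(k)=\tfrac12 \pm \sqrt{\tfrac14-k^2}$, so for $|k|\ge 1$ they have real part exactly $\tfrac12$ (the critical mode is $|k|=1$, where the eigenvalue $\tfrac12$ is defective), while for $k=0$ one gets $\mu=0$ (the kernel direction, excluded by normalization) and $\mu=1$. Hence the sharp decay rate is $\lambda=\min\{\re\mu_+(k): k\ne 0, \text{or the nonzero eigenvalue when }k=0\} = \tfrac12$, realized in the limit $|k|\to\infty$ and at $|k|=1$. Second, for the multiplicative constant I would construct, for each mode, the ``optimal'' Lyapunov functional: a Hermitian positive-definite matrix $\mathbf{P}_k$ with $\mathbf{C}_k^*\mathbf{P}_k+\mathbf{P}_k\mathbf{C}_k \ge 2\lambda \mathbf{P}_k$ (equivalently $\mathbf{P}_k$ solving the algebraic relation associated with the shifted matrix $\mathbf{C}_k-\lambda\,\mathbf{I}$), chosen so that the condition number $\mathrm{cond}(\mathbf{P}_k)=\lambda_{\max}(\mathbf{P}_k)/\lambda_{\min}(\mathbf{P}_k)$ is minimized; then $|e^{-\mathbf{C}_k t}v|\le \sqrt{\mathrm{cond}(\mathbf{P}_k)}\,e^{-\lambda t}|v|$. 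One then shows $\sup_{k} \mathrm{cond}(\mathbf{P}_k) = 3$, with the worst case being the defective mode $|k|=1$ (for non-defective modes with eigenvalues on the line $\re\mu=\tfrac12$ one can take $\mathbf{P}_k=\mathbf{I}$ after diagonalizing by a unitary, giving condition number close to but strictly larger than bounds that must be controlled; the genuinely delicate point is the defective and near-defective modes).

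For the critical mode I would compute explicitly: with the $2\times 2$ nilpotent-plus-scalar structure $\mathbf{C}_1 = \tfrac12\mathbf{I} + \mathbf{N}$ where $\mathbf N$ has a single Jordan block of eigenvalue $0$ (up to similarity), $e^{-\mathbf{C}_1 t}=e^{-t/2}(\mathbf{I}-t\mathbf N)$ in the Jordan basis, and optimizing the Lyapunov matrix $\mathbf{P}$ over all positive-definite solutions of $\mathbf N^*\mathbf P + \mathbf P\mathbf N\ge 0$ gives the minimal condition number exactly $3$; this is where the specific value $\sqrt 3$ enters, and it is consistent with the general $2$-dimensional theory for defective matrices promised in the abstract. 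The remaining work is to verify the easy endpoint inequality at $t=0$ (trivial, $c\ge 1$) and to check that no other mode gives a larger condition number — for the non-defective modes with distinct eigenvalues both having real part $\tfrac12$ one diagonalizes and bounds the condition number of the eigenvector matrix, showing it is $\le \sqrt3$, with equality only in the confluent limit. The main obstacle I anticipate is precisely the uniform control over the family: one must show that the optimal per-mode condition numbers do not blow up and that their supremum is attained and equals $3$, which requires a careful, explicit optimization for the $|k|=1$ (defective) case and a monotonicity/limit argument linking the $|k|\ge 2$ diagonalizable modes to that worst case.
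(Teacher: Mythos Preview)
Your overall strategy---modal decomposition, per-mode Lyapunov matrices $\PP_k$, and controlling $\sup_k\kappa(\PP_k)$---matches the paper's. But your spectral analysis of the critical mode is wrong, and this derails the argument where it matters most.

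For $k\in\Z$, the eigenvalues of $\CC_k$ are $\tfrac12\pm\sqrt{\tfrac14-k^2}$. The defective case would occur only at $k^2=\tfrac14$, which is \emph{not} an integer mode. At $|k|=1$ the eigenvalues are $\tfrac12\pm\ii\tfrac{\sqrt3}{2}$, distinct and non-real; the matrix is diagonalizable, not a Jordan block. So your computation ``$\CC_1=\tfrac12\II+\mathbf N$ with $\mathbf N$ nilpotent, $e^{-\CC_1 t}=e^{-t/2}(\II-t\mathbf N)$'' is simply false, and the claimed optimization over $\PP$ solving $\mathbf N^*\PP+\PP\mathbf N\ge0$ does not apply. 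A genuinely defective mode would in fact force a polynomial correction $t\,e^{-t/2}$ and preclude any finite constant $c$ at the sharp rate $\lambda=\tfrac12$.

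There is a second error: you assert that for the non-defective modes ``one can take $\PP_k=\II$ after diagonalizing by a unitary.'' The matrices $\CC_k$ are not normal (check $\CC_k\CC_k^*\ne\CC_k^*\CC_k$ for $k\ne0$), so no unitary diagonalization exists. The eigenvectors of $\CC_k^*$ are genuinely non-orthogonal, and it is precisely the angle between them that produces a condition number $>1$.

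What the paper actually does: every mode $k\ne0$ is diagonalizable with two eigenvalues of \emph{equal real part} $\tfrac12$, so Theorem~\ref{thm:bestConst-2D} applies directly. Taking $\PP_k=w_1\otimes w_1^*+w_2\otimes w_2^*$ (equal weights) one computes $\PP_k=\begin{pmatrix}1&-\ii/(2k)\\ \ii/(2k)&1\end{pmatrix}$ and $\kappa(\PP_k)=\tfrac{2|k|+1}{2|k|-1}$, which is monotone decreasing in $|k|$ with maximum $3$ at $|k|=1$. Hence $c=\sqrt3$. The worst mode is $|k|=1$ not because it is defective but because its eigenvectors are the least orthogonal (largest $\alpha$ in the notation of Lemma~\ref{bestP-2D}).
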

\begin{remark}\label{rem1.2}
\begin{enumerate}
\item[a)] 
Actually, the optimal $c$ was not specified in \cite{AAC16},
 but will be the result of \tw{Theorem~\ref{thm:bestConst-2D}} below. 
\item[b)] As we shall illustrate in \S\ref{sec:ODE-ex},
 it does \emph{not} make sense to optimize these two constants at the same time. The optimality in Theorem \ref{th:bgk} refers to first maximizing the exponential rate $\lambda$, and then to minimize the multiplicative constant $c$.
\end{enumerate}
\end{remark}

The proof of Theorem \ref{th:bgk} is based on the spatial Fourier transform of \eqref{bgk}, cf. \cite{DMS15, AAC16}. We denote the Fourier modes in the discrete velocity basis $\{ \binom{1}{1},\,\binom{1}{-1}\}$ by $u_k(t)\in\C^2,\,k\in\Z$. They evolve according to the ODE systems
\begin{equation}\label{2vel-trans}
    \ddt{u_k}= -\CC_k\,u_k\,,\quad 
    \CC_k=\left(\begin{array}{cc}
    0 & ik \\
    ik & 1
\end{array}\right)\,,\quad k\in\Z\,,
\end{equation}
and their (normalized) steady states are 
$$
  u_0^\infty=\binom{1}{0}\,;\qquad u_k^\infty=\binom{0}{0},\quad k\ne0\,.
$$

In the main body of this note we shall construct appropriate Lyapunov functionals for such ODEs, 
in order to obtain sharp decay rates of the form \eqref{exp-decay}. 
In the context of the BGK-model \eqref{bgk},
combining such decay estimates for all modes $u_k$ then yields Theorem \ref{th:bgk}, \tw{as they are uniform in $k$.} 
We remark that the construction of Lyapunov functionals to reveal optimal decay rates in ODEs was already included in the classical textbook \cite[\S22.4]{Arnold1978}, but optimality of the multiplicative constant $c$ was not an issue there.

In this article we shall first review, from \cite{AAC16, AAC17}, 
the construction of Lyapunov functionals for linear first order ODE systems that reveal the sharp decay rate. 
\tw{They are quadratic functionals represented by some Hermitian matrix $\PP$.}
As these functionals are not uniquely determined, 
we shall then \tw{discuss a strategy to find the} ``best Lyapunov'' functional in \S\ref{sec:min}---by minimizing the \tw{condition number $\kappa(\PP)$. 
The method of \S\ref{sec:min} always yields an upper bound for the minimal multiplicative constant~$c$ 
 and the sharp constant in certain subcases (see Theorem~\ref{thm:bestConst-2D}).
The refined method of \S\ref{sec:min:2} covers another subclass (see Theorem~\ref{thm:bestConst-2D:2}). 
Overall we shall determine the optimal constant $c$} 
for 2-dimensional ODE systems,
and \tw{give estimates for it in} higher dimensions.
In the final section \S\ref{sec:ODE-ex} 
we shall illustrate how to obtain a whole family of decay estimates---with suboptimal decay rates, but improved constant $c$. 
For small time this improves the estimate obtained in~\S\ref{sec:min}.


\section{Lyapunov Functionals for Hypocoercive ODEs}\label{sec:Lyap}

In this section we review decay estimates for linear ODEs with constant coefficients of the form
 \begin{equation} \label{ODE:general:n}
  \begin{cases}
   \ddt{f}= -\CC f,\quad t\ge 0\,,\\
   f(0)   = f^I \in\C^n\,,
  \end{cases}
 \end{equation}
for some (typically non-Hermitian) matrix $\CC\in\C^{n\times n}$. 
\tw{To ensure that the origin is the unique asymptotically stable steady state,}
 we assume that the matrix~$\CC$ is \emph{hypocoercive} (i.e. positive stable, meaning that all eigenvalues have positive real part). 
Since we shall \emph{not} require that $\CC$ is coercive (meaning that its Hermitian part would be positive definite),  
we \emph{cannot} expect that all solutions to \eqref{ODE:general:n} satisfy for the Euclidean norm: $\|f(t)\|_2\le e^{-\widetilde\lambda t}\|f^I\|_2$ for some $\widetilde\lambda>0$. However, such an exponential decay estimate does hold in an adapted norm that can be used as a Lyapunov functional. 

The construction of this Lyapunov functional is based on the following lemma:

\begin{lemma}[{\cite[Lemma 2]{AAC16}}, {\cite[Lemma 4.3]{ArEr14}}] \label{lemma:Pdefinition}
For any fixed matrix $\CC\in\C^{n\times n}$,
 let $\mu:=\min\{\Re(\lambda)|\lambda$ is an eigenvalue of $\CC\}$. 
Let $\{\lambda_{j}|1\leq j\leq j_0\}$ be all the eigenvalues of $\CC$ with $\Re(\lambda_j)=\mu$. 
If all $\lambda_j$ ($j=1,\dots,j_0$) are non-defective\footnote{An eigenvalue is defective if its geometric multiplicity is strictly less than its algebraic multiplicity.},
  then there exists a positive definite Hermitian matrix $\PP\in\C^{n\times n}$ with
  \begin{align} \label{matrixestimate1}
   \CC^*\PP+\PP \CC &\geq 2\mu \PP\,,
  \end{align}
but $\PP$ is not uniquely determined.

 Moreover, if all eigenvalues of $\CC$ are non-defective, 
 examples of such matrices $\PP$ satisfying~\eqref{matrixestimate1} are given by
 \begin{align} \label{simpleP1} 
  \PP:= \sum\limits_{j=1}^n b_j \, w_j \otimes w_j^* \,,
 \end{align}
 where $w_j\in\C^{n}$ ($j=1,\dots,n$) denote the (right) normalized eigenvectors of $\CC^*$ (i.e. $\CC^* w_j =\bar\lambda_j w_j$),
 and $b_j\in\R^+$ ($j=1,\dots,n$) are arbitrary weights.
\end{lemma}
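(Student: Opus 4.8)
The plan is to split the argument into the two assertions: first the general existence claim under the non-defectiveness hypothesis on the eigenvalues $\lambda_j$ realizing the minimal real part $\mu$, and then the explicit formula \eqref{simpleP1} under the stronger hypothesis that \emph{all} eigenvalues of $\CC$ are non-defective. I would begin with the easier, fully diagonalizable situation, since it both motivates \eqref{simpleP1} and supplies the building block for the general case. So assume first that $\CC^*$ is diagonalizable with normalized eigenvectors $w_j$ and $\CC^* w_j = \bar\lambda_j w_j$. The key computation is to evaluate $\ip{(\CC^*\PP + \PP\CC)v}{v}$ for arbitrary $v\in\C^n$ when $\PP = \sum_j b_j\, w_j\otimes w_j^*$. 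Using $\PP\CC = \sum_j b_j\, w_j (w_j^*\CC) = \sum_j b_j\, w_j (\CC^* w_j)^* = \sum_j b_j \lambda_j\, w_j\otimes w_j^*$ and, by taking adjoints, $\CC^*\PP = \sum_j b_j \bar\lambda_j\, w_j\otimes w_j^*$, one gets $\CC^*\PP + \PP\CC = \sum_j 2b_j \re(\lambda_j)\, w_j\otimes w_j^* \ge \sum_j 2b_j\, \mu\, w_j\otimes w_j^* = 2\mu\PP$, since $b_j > 0$ and $\re(\lambda_j) \ge \mu$. That $\PP$ is positive definite follows because the $w_j$ span $\C^n$ and the $b_j$ are strictly positive. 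This proves \eqref{matrixestimate1} and \eqref{simpleP1} simultaneously in the non-defective case, and non-uniqueness is evident since the weights $b_j$ are free.

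For the general assertion — only the eigenvalues on the ``critical line'' $\re(\lambda)=\mu$ are required to be non-defective — I would pass to a basis adapted to the spectral decomposition of $\CC$. Write $\C^n = V_{\mathrm{crit}} \oplus V_{\mathrm{rest}}$, where $V_{\mathrm{crit}}$ is the (semisimple, since those eigenvalues are non-defective) spectral subspace of $\CC$ for the eigenvalues $\lambda_1,\dots,\lambda_{j_0}$ and $V_{\mathrm{rest}}$ is the complementary $\CC$-invariant subspace associated with eigenvalues of real part $> \mu$. On $V_{\mathrm{rest}}$ the matrix $\CC$ restricts to an operator $\CC_{\mathrm{rest}}$ whose spectrum lies strictly to the right of $\mu$; by a standard Lyapunov argument (solving $\CC_{\mathrm{rest}}^* Q + Q\,\CC_{\mathrm{rest}} \geq 2\mu\, Q$ with $Q$ positive definite — for instance via $Q = \int_0^\infty e^{2\mu t}\, e^{-\CC_{\mathrm{rest}}^* t} e^{-\CC_{\mathrm{rest}} t}\,dt$, which converges precisely because the real parts exceed $\mu$) one obtains a positive definite $Q$ on $V_{\mathrm{rest}}$ satisfying the inequality strictly, hence a fortiori $\geq 2\mu Q$. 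On $V_{\mathrm{crit}}$ the diagonalizable computation from the first paragraph gives a positive definite $P_{\mathrm{crit}}$ with $\CC_{\mathrm{crit}}^* P_{\mathrm{crit}} + P_{\mathrm{crit}}\CC_{\mathrm{crit}} = 2\mu P_{\mathrm{crit}}$. The remaining point is to assemble these two blocks into a single positive definite $\PP$ on all of $\C^n$ that still satisfies \eqref{matrixestimate1}; if the splitting $V_{\mathrm{crit}} \oplus V_{\mathrm{rest}}$ were orthogonal one could simply take the block diagonal $\PP = P_{\mathrm{crit}} \oplus Q$, but in general the two invariant subspaces are not orthogonal, so one first applies a linear change of coordinates $T$ bringing $\CC$ into block-diagonal form with respect to an \emph{orthogonal} splitting (possible since the subspaces are complementary), builds the block-diagonal $\PP$ there, and pulls back via $T^*(\cdot)T$; the conjugated inequality has the same form because $(T^{-1}\CC T)^* \widehat\PP + \widehat\PP (T^{-1}\CC T) \geq 2\mu\widehat\PP$ transforms covariantly.

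I expect the main obstacle to be precisely this gluing step: ensuring that combining a block where the Lyapunov inequality is an \emph{equality} (the critical, semisimple part) with a block where it is a strict inequality yields a genuinely positive definite matrix on the whole space satisfying the \emph{non-strict} inequality $\geq 2\mu\PP$, and tracking how the inequality behaves under the non-orthogonal change of basis. One must be a little careful that scaling the two blocks relative to each other (multiplying $Q$ by a large or small constant) does not break the inequality — it does not, since both $P_{\mathrm{crit}} \oplus (\varepsilon Q)$ and the required inequality are homogeneous of degree one in each block and there is no cross term after the orthogonalizing change of coordinates. The remaining non-uniqueness statement is then immediate: the weights $b_j$ in the critical block and the scaling of $Q$ give an explicit continuum of admissible $\PP$.
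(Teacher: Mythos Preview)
The paper does not actually supply its own proof of this lemma: it is quoted verbatim from \cite[Lemma~2]{AAC16} and \cite[Lemma~4.3]{ArEr14}, and the text proceeds immediately to consequences. So there is nothing in the present paper to compare your argument against line by line.

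That said, your proposal is mathematically sound and follows the standard route. The diagonalizable case is handled cleanly: the identity $\CC^*\PP+\PP\CC=\sum_j 2b_j\,\Re(\lambda_j)\,w_j\otimes w_j^*$ together with $\Re(\lambda_j)\ge\mu$ gives \eqref{matrixestimate1} directly, and positive definiteness follows since the $w_j$ form a basis. For the general case your spectral splitting $V_{\mathrm{crit}}\oplus V_{\mathrm{rest}}$, the Lyapunov integral $Q=\int_0^\infty e^{2\mu t}e^{-\CC_{\mathrm{rest}}^* t}e^{-\CC_{\mathrm{rest}} t}\,dt$ on the non-critical block (which yields $\CC_{\mathrm{rest}}^* Q+Q\,\CC_{\mathrm{rest}}=2\mu Q+\II$), and the pull-back $\PP=(T^{-1})^*\widehat\PP\,T^{-1}$ all work as you describe; the covariance of the matrix inequality under congruence by $T^{-1}$ is exactly the right observation. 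The one place to be slightly more explicit is that ``$\CC_{\mathrm{crit}}^*$'' in your block computation means the adjoint of the matrix representing $\CC|_{V_{\mathrm{crit}}}$ \emph{after} the change of basis $T$, not the restriction of $\CC^*$ to $V_{\mathrm{crit}}$ (which need not even be invariant under $\CC^*$); your formulation via $T$ handles this correctly, but it is worth saying once so the reader does not stumble.
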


For $n=2$ all positive definite Hermitian matrices $\PP$ satisfying~\eqref{matrixestimate1} have the form~\eqref{simpleP1},
but for $n\geq 3$ this is not true 
(see Lemma~\ref{bestP-2D-diagonalB} and \tw{Example~\ref{ex:matrixP:3D}}, respectively).

In this article, for simplicity, 
we shall only consider the case when all eigenvalues of $\CC$ are non-defective. 
For the extension of Lemma \ref{lemma:Pdefinition} and of the corresponding decay estimates to the defective case we refer to \cite[Prop. 2.2]{AAS} and \cite{AJW18}.

\medskip
Due to the positive stability of $\CC$, the origin is the unique and asymptotically stable steady state $f^\infty=0$ of~$\eqref{ODE:general:n}$:
Due to Lemma~\ref{lemma:Pdefinition}, 
there exists a positive definite Hermitian matrix $\PP\in\C^{n\times n}$ such that $\CC^* \PP +\PP\CC \geq 2\mu \PP$ where $\mu = \min \Re(\lambda_j) >0$.
Thus, the time derivative of the adapted norm $\|f\|^2_\PP := \ip{f}{\PP f}$ along solutions of \eqref{ODE:general:n} satisfies
 \begin{align*}
  \ddt \|f(t)\|^2_\PP 
    &\leq -2\mu \|f(t)\|^2_\PP\,. 
 \end{align*} 
\tw{Hence the evolution becomes a contraction in the adapted norm:}
\begin{equation} \label{ODE-normP-decay}
 \|f(t)\|^2_\PP\leq e^{-2\mu t} \|f^I \|^2_\PP \,,\qquad t\ge0\,.
\end{equation}  
\tw{Clearly, this procedure can yield the sharp decay rate $\mu$, only if $\PP$ satisfies \eqref{matrixestimate1}.}

Next we translate this decay in $\PP$-norm into a decay in the Euclidean norm:
\begin{equation} \label{ODE:exponentialDecay}
 \|f(t)\|^2_2\leq (\lambda^\PP_{\min})^{-1} \|f(t)\|^2_\PP \leq  (\lambda^\PP_{\min})^{-1} e^{-2\mu t} \|f^I \|^2_\PP \leq  \kappa(\PP)\, e^{-2\mu t} \|f^I \|^2_2 \,,\quad t\ge0\,,
\end{equation}  
where $0<\lambda^\PP_{\min}\le\lambda^\PP_{\max}$ are, respectively, the smallest and largest eigenvalues of $\PP$, and $\kappa(\PP)={\lambda^\PP_{\max}}/{\lambda^\PP_{\min}}$ is the (numerical) condition number of $\PP$ with respect to the Euclidean norm.
\tw{While~\eqref{ODE-normP-decay} is sharp, \eqref{ODE:exponentialDecay} is not necessarily sharp:}
Given the spectrum of \tw{$\CC$}, the exponential decay rate in \eqref{ODE:exponentialDecay} is optimal,
\tw{but the multiplicative constant not necessarily.
For the optimality of the chain of inequalities in \eqref{ODE:exponentialDecay}
 we have to distinguish two scenarios: 
Does there exist an initial datum $f^I$ such that each inequality will be (simultaneously) an equality for some \emph{finite} $t_0\ge0$~? 
Or is this only possible asymptotically as $t\to\infty$~? 
We shall start the discussion with the former case, which is simpler, and defer the latter case to \S\ref{sec:min:2}. 
The first scenario allows to find the optimal multiplicative constant for $\CC\in\R^{2\times2}$, based on \eqref{ODE:exponentialDecay}. 
But in other cases it may only yield an explicit upper bound for it, as we shall discuss in \S\ref{sec:min:2}.}

\tw{Concerning the first inequality of \eqref{ODE:exponentialDecay}, a solution $f(t_0)$ will satisfy $\|f(t_0)\|^2_2= (\lambda^\PP_{\min})^{-1} \|f(t_0)\|^2_\PP$ for some $t_0\geq 0$
 only if $f(t_0)$ is in the eigenspace associated to the eigenvalue $\lambda^\PP_{\min}$ of~$\PP$.
Moreover, the initial datum $f^I$ satisfies $\|f^I \|^2_\PP = \lambda^\PP_{\max} \|f^I \|^2_2$
 if $f^I$ is in the eigenspace associated to the eigenvalue $\lambda^\PP_{\max}$ of $\PP$.
Finally we consider the second inequality of \eqref{ODE:exponentialDecay}:
If the matrix $\CC$ satisfies, e.g., $\Re\lambda_j=\mu>0;\,j=1,...,n$, with all eigenvalues non-defective, then we always have 
\begin{equation} \label{est:middle}
  \|f(t)\|^2_\PP = e^{-2\mu t} \|f^I \|^2_\PP \qquad \forall t\geq 0\,,
\end{equation}
since \eqref{matrixestimate1} is an equality then. This is the case for our main example \eqref{2vel-trans} with $k\ne0$.}

\tw{Since the matrix $\PP$ is not unique, we shall now discuss the choice of $\PP$ as }
to minimize the multiplicative constant in \eqref{ODE:exponentialDecay}. 
To this end we need to find the matrix $\PP$ with minimal condition number that satisfies \eqref{matrixestimate1}. 
Clearly, the answer can only be unique up to 
a positive multiplicative constant,
since $\widetilde\PP:=\tau\PP$ with $\tau>0$ would reproduce the estimate \eqref{ODE:exponentialDecay}.

As we shall prove in \S\ref{sec:min}, the answer to this minimization problem is very easy in 2 dimensions: 
The best $\PP$ corresponds to equal weights in \eqref{simpleP1}, e.g. choosing $b_1=b_2=1$.


\section{\tw{Optimal Constant via} Minimization of the Condition Number}\label{sec:min}

In this section, 
we describe a procedure \tw{towards constructing ``optimal'' Lyapunov functionals: 
For solutions $f(t)$ of ODE~\eqref{ODE:general:n} they will imply 
 \begin{equation} \label{exp-decay:c}
   \|f(t)\|_2\le c\,e^{-\mu t}\|f^I\|_2
 \end{equation}
with the sharp constant $\mu$ and partly also the sharp constant $c$.}

We shall describe the procedure for ODEs~\eqref{ODE:general:n} with positive stable matrices~$\CC$.
For simplicity we confine ourselves to diagonalizable matrices~$\CC$ (i.e. all eigenvalues are non-defective). 
In this case, Lemma~\ref{lemma:Pdefinition} states that \tw{there exist positive definite Hermitian matrices $\PP$ 
 satisfying the matrix inequality~\eqref{matrixestimate1}.
Following \eqref{ODE:exponentialDecay}, $\sqrt{\kappa(\PP)}$ is always an upper bound for the constant $c$ in~\eqref{exp-decay:c}.}
Our strategy is now to minimize $\kappa(\PP)$ on the set of \tw{all admissible matrices~$\PP$}. 
We shall prove that this actually yields the minimal constant $c$ 
 \tw{in certain cases (see Theorem~\ref{thm:bestConst-2D}).}
In 2 dimensions this minimization problem can be solved very easily
 \tw{thanks to Lemma~\ref{bestP-2D-diagonalB} and Lemma~\ref{bestP-2D}: }
\begin{lemma}\label{bestP-2D-diagonalB}
 Let $\CC\in\C^{2\times 2}$ be a diagonalizable, positive stable matrix.
 Then all matrices~$\PP$ satisfying~\eqref{matrixestimate1} are of the form~\eqref{simpleP1}.
\end{lemma}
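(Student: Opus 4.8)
The plan is to transform the matrix inequality~\eqref{matrixestimate1} into an entrywise condition by writing $\PP$ in an eigenbasis of $\CC^*$, and then to invoke that a $2\times2$ positive semidefinite matrix has nonnegative determinant. So let $\PP\in\C^{2\times 2}$ be a positive definite Hermitian matrix satisfying~\eqref{matrixestimate1}.

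Suppose first that $\CC$ has two distinct eigenvalues $\lambda_1\ne\lambda_2$ (necessarily with $\Re\lambda_j>0$). Then the normalized eigenvectors $w_1,w_2$ of $\CC^*$ appearing in~\eqref{simpleP1} form a basis of $\C^2$; let $\WW\in\C^{2\times 2}$ be the matrix with columns $w_1,w_2$, so that $\WW$ is invertible, $\CC^*\WW=\WW\diag(\bar\lambda_1,\bar\lambda_2)$, and hence $\WW^*\CC=\diag(\lambda_1,\lambda_2)\WW^*$. Set $\QQ:=\WW^{-1}\PP(\WW^*)^{-1}=(q_{jk})$, so that $\PP=\WW\QQ\WW^*$; then $\QQ$ is positive definite Hermitian, and $\PP$ has the form~\eqref{simpleP1} if and only if $\QQ$ is diagonal (its diagonal entries being then the weights $b_j>0$). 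A direct computation using the two identities for $\WW$ gives
\begin{equation*}
 \CC^*\PP+\PP\CC-2\mu\PP \;=\; \WW\,M\,\WW^*, \qquad M_{jk}:=\big(\bar\lambda_j+\lambda_k-2\mu\big)\,q_{jk},
\end{equation*}
with $M$ Hermitian; since $\WW$ is invertible, \eqref{matrixestimate1} is equivalent to $M\ge0$.

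For $n=2$, $M\ge0$ amounts to $M_{11}\ge0$, $M_{22}\ge0$ and $\det M\ge0$. The first two hold automatically, as $M_{jj}=2(\Re\lambda_j-\mu)\,q_{jj}$ with $\Re\lambda_j\ge\mu$ and $q_{jj}>0$. The determinant condition reads
\begin{equation*}
 4(\Re\lambda_1-\mu)(\Re\lambda_2-\mu)\,q_{11}q_{22}\;\ge\;|\bar\lambda_1+\lambda_2-2\mu|^2\,|q_{12}|^2.
\end{equation*}
Since $\mu=\min\{\Re\lambda_1,\Re\lambda_2\}$ the left-hand side vanishes, forcing the right-hand side to vanish as well. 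And $\bar\lambda_1+\lambda_2-2\mu=0$ would imply, on comparing real and imaginary parts, $\Re\lambda_1=\Re\lambda_2=\mu$ and $\Im\lambda_1=\Im\lambda_2$, i.e. $\lambda_1=\lambda_2$, contrary to assumption. Hence $q_{12}=0$: $\QQ$ is diagonal with positive entries, and $\PP$ has the form~\eqref{simpleP1}.

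Finally, if $\CC$ has a repeated eigenvalue $\lambda$, diagonalizability of $\CC$ forces $\CC=\lambda I_2$; then $\CC^*\PP+\PP\CC=2(\Re\lambda)\PP=2\mu\PP$, so~\eqref{matrixestimate1} holds (with equality) for \emph{every} positive definite Hermitian $\PP$, and by the spectral theorem any such $\PP$ equals $\sum_{j=1}^2 b_j\,u_j\otimes u_j^*$ for an orthonormal basis $\{u_1,u_2\}$ and $b_j>0$; since every vector is an eigenvector of $\CC^*=\bar\lambda I_2$, this is again of the form~\eqref{simpleP1}. The one step needing genuine care is the reduction in the second paragraph — checking that conjugation by $\WW$ turns~\eqref{matrixestimate1} into the entrywise-weighted matrix $M$, and that ``$\QQ$ diagonal'' corresponds exactly to~\eqref{simpleP1}; everything afterwards is the elementary $2\times2$ positive-semidefiniteness criterion together with the short case split on whether the eigenvalues coincide.
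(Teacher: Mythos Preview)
Your proof is correct and follows essentially the same approach as the paper: write $\PP=\WW\QQ\WW^*$ in the eigenbasis of $\CC^*$, reduce \eqref{matrixestimate1} to positive semidefiniteness of the matrix $(\DD^*-\mu\II)\QQ+\QQ(\DD-\mu\II)$ (your $M$), and use the $2\times2$ determinant criterion to force the off-diagonal entry to vanish unless $\lambda_1=\lambda_2$. The only difference is organizational---you split off the repeated-eigenvalue case up front and write $M$ via the clean entrywise formula $M_{jk}=(\bar\lambda_j+\lambda_k-2\mu)q_{jk}$, whereas the paper discovers that case inside the computation---but the substance is the same.
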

\begin{proof}
We use again the matrix $\WW$ whose columns are the normalized (right) eigenvectors of $\CC^*$
 such that
 \begin{equation} \label{C*:JordanForm}
  \CC^* \WW = \WW \DD^* \,,
 \end{equation}
 with $\DD=\diag(\lambda_1^\CC,\lambda_2^\CC)$
 where $\lambda_j^\CC$ ($j\in\{1,2\}$) are the eigenvalues of $\CC$.
Since $\WW$ is regular, $\PP$ can be written as
 \[ \PP = \WW \BB \WW^* \,, \]
 with some positive definite Hermitian matrix~$\BB$.
Then the matrix inequality~\eqref{matrixestimate1} can be written as 
 \begin{align*} 
  2\mu \WW \BB \WW^*
   &\leq \CC^*\WW \BB \WW^* +\WW \BB \WW^* \CC 
    = \WW (\DD^* \BB +\BB \DD) \WW^* \,.
 \end{align*}
This matrix inequality is equivalent to
 \begin{equation} \label{matrixestimate2}
  0 \leq (\DD^*-\mu\II) \BB + \BB (\DD -\mu\II) \,.
 \end{equation}
Next we order the eigenvalues $\lambda_j^\CC$ ($j\in\{1,2\}$) of $\CC$
 increasingly with respect to their real parts, 
 such that $\Re(\lambda_1^\CC)=\mu$.
Moreover, we consider 
\begin{equation*} 
 \BB =\begin{pmatrix} b_1 & \beta \\ \overline{\beta} & b_2 \end{pmatrix} 
\end{equation*}
 where $b_1,b_2>0$ and $\beta\in\C$ with $|\beta|^2 < b_1 b_2$.
Then the right hand side of~\eqref{matrixestimate2} is 
 \begin{equation} \label{matrixestimate2b}
   (\DD^*-\mu\II) \BB + \BB (\DD -\mu\II) 
     = \begin{pmatrix}
         0 & (\lambda_2^\CC-\lambda_1^\CC) \beta \\
         \overline{(\lambda_2^\CC-\lambda_1^\CC) \beta}\ & 2 b_2 \Re(\lambda_2^\CC-\lambda_1^\CC)
        \end{pmatrix} 
 \end{equation}
 with $\trace [(\DD^*-\mu\II) \BB + \BB (\DD -\mu\II)] = 2 b_2 \Re(\lambda_2^\CC-\lambda_1^\CC)$ 
 and 
 \[ \det [(\DD^*-\mu\II) \BB + \BB (\DD -\mu\II)]
     = - \big| \lambda_2^\CC-\lambda_1^\CC \big|^2 |\beta|^2 \,.
 \]
Condition~\eqref{matrixestimate2} is satisfied if and only if 
 $\trace [(\DD^*-\mu\II) \BB + \BB (\DD -\mu\II)]\geq 0$ which holds due to our assumptions on $\lambda_2^\CC$ and $b_2$,
 and $\det [(\DD^*-\mu\II) \BB + \BB (\DD -\mu\II)]\geq 0$.
The last condition holds if and only if 
 \[ \lambda_2^\CC =\lambda_1^\CC \quad \text{or } \beta =0\,. \]
In the latter case $\BB$ is diagonal and hence $\PP$ is of the form~\eqref{simpleP1}.
In the former case, 
 \eqref{C*:JordanForm} shows that $\CC =\lambda_1^\CC \II$,
 and the inequality~\eqref{matrixestimate1} is trivial.
Now any positive definite Hermitian matrix $\PP$ has a diagonalization $\PP =\VV \EE \VV^*$,
 with a diagonal real matrix $\EE$ and an orthogonal matrix $\VV$,
 whose columns are --of course-- eigenvectors of $\CC$.
Thus, $\PP$ is again of the form~\eqref{simpleP1}.
\qed
\end{proof}
\tw{In contrast to this 2D result,}
 in dimensions $n\geq 3$ there exist matrices~$\PP$ satisfying~\eqref{matrixestimate1} which are not of form~\eqref{simpleP1}:
\begin{example} \label{ex:matrixP:3D}
Consider the matrix $\CC =\diag(1,2,3)$.
Then, all matrices 
 \begin{equation} \label{matrix:P:block-diagonal} 
  \PP(b_1,b_2,b_3,\beta) = \begin{pmatrix} b_1 & 0 & 0 \\ 0 & b_2 & \beta \\ 0 & \beta & b_3 \end{pmatrix}
 \end{equation}
 with positive $b_j$ ($j\in\{1,2,3\}$) and $\beta\in\R$ such that $8b_2 b_3 -9\beta^2\geq 0$,
 are positive definite Hermitian matrices
 and satisfy~\eqref{matrixestimate1} for $\CC =\diag(1,2,3)$ and $\mu=1$.
But the eigenvectors of $\CC^*$ are the canonical unit vectors.
Hence, matrices of form~\eqref{simpleP1} would all be diagonal. 
\qed
\end{example}
 
\tw{Restricting the minimization problem to admissible matrices~$\PP$ of form~\eqref{simpleP1} we find:}
Defining a matrix $\WW:=(w_1|\ldots|w_n)$ whose columns are the (right) normalized eigenvectors of $\CC^*$
 allows to rewrite formula~\eqref{simpleP1} as
 \begin{equation} \label{def:P:matrixW:2}
 \begin{split}
   \PP &= \sum\limits_{j=1}^n b_j \, w_j \otimes w_j^*
        = \WW \diag(b_1,b_2,\ldots,b_n) \WW^* \\
       &= \big(\WW \diag(\sqrt{b_1},\sqrt{b_2},\ldots,\sqrt{b_n})\big) \big(\WW \diag(\sqrt{b_1},\sqrt{b_2},\ldots,\sqrt{b_n})\big)^*
 \end{split}
 \end{equation}
with positive constants $b_j$ ($j=1,\ldots,n$).
The identity
 \[ \WW \diag(\sqrt{b_1},\sqrt{b_2},\ldots,\sqrt{b_n}) = (\sqrt{b_1} w_1 | \ldots | \sqrt{b_n} w_n ) \]
 shows that the weights are just rescalings of the eigenvectors.
Finally, the condition number of $\PP$ is the squared condition number of $(\WW \diag(\sqrt{b_1},\sqrt{b_2},\ldots,\sqrt{b_n}))$.
Hence, to find matrices $\PP$ of form~\eqref{def:P:matrixW:2} with minimal condition number,
 is equivalent to identifying (right) precondition matrices
 among the positive definite diagonal matrices
 which minimize the condition number of $\WW$.
This minimization problem can be formulated as a convex optimization problem~\cite{BrMo94}
 based on the result~\cite{SeOv1990}.
Due to \cite[Theorem 1]{Bu68},
 the minimum is attained (i.e.\ an optimal scaling matrix exists)
 since our matrix $\WW$ is non-singular. 
(Note that its column vectors form a basis of $\C^n$.)
The convex optimization problem can be solved by standard software
 providing also the exact scaling matrix which minimizes the condition number of $\PP$,
 see the discussion and references in~\cite{BrMo94}.
For more information on convex optimization and numerical solvers, see e.g.~\cite{BoVa04}.

\tw{We return to the minimization of $\kappa(\PP)$ in 2 dimensions:}
\begin{lemma}\label{bestP-2D}
 Let $\CC\in\C^{2\times 2}$ be a diagonalizable, positive stable matrix.
 Then the condition number of the associated matrix~$\PP$ in~\eqref{simpleP1} is minimal
 by choosing equal weights, e.g. $b_1=b_2=1$.
\end{lemma}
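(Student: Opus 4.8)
The plan is to compute $\kappa(\PP)$ explicitly as a function of the weight ratio and show it is minimized at ratio $1$. Write $\PP = \WW \diag(b_1,b_2) \WW^*$ as in \eqref{def:P:matrixW:2}. Since $\kappa(\PP)$ is invariant under the global scaling $\PP \mapsto \tau\PP$, I may set $b_1 = 1$ and $b_2 = t > 0$, so the task reduces to minimizing $t \mapsto \kappa(\PP_t)$ over $t>0$, where $\PP_t = b_1 w_1 w_1^* + t\, w_2 w_2^*$. If $\CC$ is a multiple of the identity then (as in the proof of Lemma~\ref{bestP-2D-diagonalB}) every positive definite $\PP$ works and one can take $\PP=\II$, so assume $\lambda_1^\CC \neq \lambda_2^\CC$; then $w_1,w_2$ are linearly independent but in general not orthogonal.

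First I would reduce to a normalized situation. Let $\gamma := |\ip{w_1}{w_2}| \in [0,1)$ be the cosine of the angle between the two (unit) eigenvectors; by a unitary change of coordinates I may assume $w_1 = \binom{1}{0}$ and $w_2 = \binom{\gamma}{\sqrt{1-\gamma^2}}$ (a phase in $w_2$ is irrelevant to $\PP_t$). Then
\begin{equation*}
  \PP_t = \begin{pmatrix} 1 + t\gamma^2 & t\gamma\sqrt{1-\gamma^2} \\ t\gamma\sqrt{1-\gamma^2} & t(1-\gamma^2) \end{pmatrix},
\end{equation*}
so $\trace \PP_t = 1 + t$ and $\det \PP_t = t(1-\gamma^2)$. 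The two eigenvalues $\lambda_\pm(t)$ of $\PP_t$ are the roots of $\lambda^2 - (1+t)\lambda + t(1-\gamma^2) = 0$, hence
\begin{equation*}
  \kappa(\PP_t) = \frac{\lambda_+(t)}{\lambda_-(t)}, \qquad \lambda_\pm(t) = \frac{(1+t) \pm \sqrt{(1+t)^2 - 4t(1-\gamma^2)}}{2}.
\end{equation*}

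Now I would minimize $\kappa(\PP_t)$ directly. Since $\kappa = \lambda_+/\lambda_-$ and $\lambda_+\lambda_- = \det\PP_t$ is fixed once... no — it depends on $t$. So instead use the standard reformulation: for a $2\times 2$ positive definite matrix, $\kappa + \kappa^{-1} + 2 = (\lambda_+ + \lambda_-)^2/(\lambda_+\lambda_-) = (\trace\PP_t)^2/\det\PP_t$. Thus minimizing $\kappa$ is equivalent to minimizing
\begin{equation*}
  g(t) := \frac{(\trace\PP_t)^2}{\det\PP_t} = \frac{(1+t)^2}{t(1-\gamma^2)}.
\end{equation*}
The factor $(1-\gamma^2)^{-1}$ is a constant, and $(1+t)^2/t = t + 2 + 1/t$ is strictly convex on $(0,\infty)$ with a unique minimum at $t = 1$ (by AM–GM, $t + 1/t \ge 2$ with equality iff $t=1$). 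Hence $g$, and therefore $\kappa(\PP_t)$, is minimized exactly at $t = 1$, i.e. $b_1 = b_2$. This gives $\kappa_{\min} = \kappa(\PP_1)$ with $g(1) = 4/(1-\gamma^2)$, so $\kappa_{\min} + \kappa_{\min}^{-1} + 2 = 4/(1-\gamma^2)$, which (solving the quadratic) yields the explicit value $\kappa_{\min} = \big(1 + \gamma\big)/\big(1 - \gamma\big)$ — worth recording for later use in Theorem~\ref{thm:bestConst-2D}.

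The only real subtlety is the passage through the identity $\kappa + \kappa^{-1} + 2 = (\trace)^2/\det$; one must note it is a monotone (increasing) function of $\kappa$ on $[1,\infty)$, so minimizing the right-hand side genuinely minimizes $\kappa$. Everything else — the unitary normalization of the eigenvectors, the computation of $\trace\PP_t$ and $\det\PP_t$, and the one-variable convexity argument — is routine. I expect the main (mild) obstacle to be bookkeeping: making sure the reduction to $w_1 = \binom10$, $w_2 = \binom{\gamma}{\sqrt{1-\gamma^2}}$ is justified (unitary invariance of $\kappa$ and irrelevance of eigenvector phases) and handling the degenerate case $\CC = \lambda\II$ separately as above.
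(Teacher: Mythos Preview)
Your proof is correct and follows essentially the same route as the paper: unitary normalization of the eigenvectors to the canonical form $w_1=\binom{1}{0}$, $w_2=\binom{\gamma}{\sqrt{1-\gamma^2}}$, reduction to a one-parameter family via the scaling freedom, and minimization of $\kappa$ through the quantity $(\trace\PP)^2/\det\PP$, arriving at $\kappa_{\min}=(1+\gamma)/(1-\gamma)$. The only cosmetic difference is that the paper parametrizes the weights as $b_1=1/b,\ b_2=b$, which makes $\det\PP=1-\gamma^2$ independent of the parameter and reduces the problem to minimizing $\trace\PP=b+1/b$ alone; your parametrization $b_1=1,\ b_2=t$ requires the extra (equally easy) step of minimizing $(1+t)^2/t$.
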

\begin{proof}
A diagonalizable matrix $\CC$ has only non-defective eigenvalues.
Up to a unitary transformation,
 we can assume w.l.o.g. that the eigenvectors of $\CC^*$ are 
 \begin{equation} \label{eigenvectors:newBasis}
 w_1 = \begin{pmatrix} 1 \\ 0 \end{pmatrix}\,, \quad w_2 = \begin{pmatrix} \alpha \\ \sqrt{1-\alpha^2} \end{pmatrix} \quad \text{for some $\alpha\in[0,1)$.} 
 \end{equation}
This unitary transformation describes the change of the coordinate system.
To construct the new basis,
 we choose one of the normalized eigenvectors~$w_1$ as first basis vector,
 and recall that the second normalized eigenvector~$w_2$ is only determined up to a scalar factor $\gamma\in\C$ with $|\gamma|=1$. 
The right choice for the scalar factor $\gamma$ allows to fulfill the above restriction on $\alpha$.
 
We use the representation of the positive definite matrix~$\PP$ in~\eqref{def:P:matrixW:2}:
\begin{equation} \label{matrixW}
  \PP = \WW \diag(b_1,b_2) \WW^* \quad \text{with }
    \WW = \begin{pmatrix} 1 & \alpha \\ 0 & \sqrt{1-\alpha^2} \end{pmatrix} \,.
\end{equation} 
Since $\PP$ and $\tau\PP$ have the same condition number,
 we consider w.l.o.g. $b_1=1/b$ and $b_2=b$.
Thus, we have to determine the positive parameter $b>0$ which minimizes the condition number of 
 \begin{equation} \label{def:matrixP:b}
  \PP(b) = \WW \diag(1/b,b) \WW^*
         = \begin{pmatrix}
            \tfrac1b + b\alpha^2 & b\alpha \sqrt{1-\alpha^2} \\
            b\alpha \sqrt{1-\alpha^2} & b (1-\alpha^2)
           \end{pmatrix}\,.
 \end{equation}
The condition number of matrix $\PP(b)$ is given by
 \begin{equation*} 
  \kappa(\PP(b)) =\lambda^\PP_+(b) /\lambda^\PP_-(b) \ge1 \,,
 \end{equation*}
 where
 \[ \lambda_\pm^\PP (b) = \frac{\trace\PP(b)\pm \sqrt{(\trace\PP(b))^2 -4\det \PP(b)}}{2} \]
 are the (positive) eigenvalues of $\PP(b)$.  
We notice that $\trace\PP(b) = b+1/b$ is independent of $\alpha$
 and is a convex function of $b\in(0,\infty)$ 
 which attains its minimum for $b=1$.
Moreover, $\det\PP(b)=1-\alpha^2$ is independent of $b$.
This implies that the condition number 
 \[
  \kappa(\PP(b)) =\frac{\lambda^\PP_+(b)}{\lambda^\PP_-(b)}
              =\frac{1 + \sqrt{1 -\frac{4\det \PP(b)}{(\trace\PP(b))^2}}}{1 - \sqrt{1 -\frac{4\det \PP(b)}{(\trace\PP(b))^2}}}
 \]
 attains its \tw{unique} minimum at $b=1$, taking the value 
 \begin{equation} \label{kappa:min}
  \kappa_{\min}=\frac{1+\alpha}{1-\alpha}\,.
 \end{equation}
 \qed
\end{proof}

\tw{
This 2D-result does not generalize to higher dimensions.
In dimensions $n\geq 3$ there exist diagonalizable positive stable matrices~$\CC$,
 such that the matrix $\PP$ with equal weights~$b_j$ does not yield the lowest condition number
 among all matrices of form~\eqref{simpleP1}.}
We give a counterexample in 3 dimensions: 
\begin{example} \label{ex:matrixP:3D:2}
For some $\CC^*$, consider its eigenvector matrix 
 \begin{equation} \label{matrix:W}
   \WW := \begin{pmatrix} 1 & 1 & 1 \\ 0 & 1 & 1 \\ 0 & 0 & 1 \end{pmatrix}
   \diag\Bigg(1,\frac{1}{\sqrt{2}},\frac{1}{\sqrt{3}} \Bigg) \,,
 \end{equation}
 which has normalized column vectors.
We define the matrices $\PP(b_1,b_2,b_3) :=\WW \diag(b_1,b_2,b_3) \WW^*$
 for positive parameters $b_1$, $b_2$ and $b_3$,
 which are of form~\eqref{simpleP1}
 and hence satisfy the inequality~\eqref{matrixestimate1}.
In case of equal weights $b_1=b_2=b_3$
 the condition number is $\kappa(\PP(b_1,b_1,b_1)) \approx 15.12825876$.
But using 
 \cite[Theorem 3.3]{Ko2006},
 the minimal condition number $\min_{b_j} \kappa(\PP(b_1,b_2,b_3)) \approx 13.92820324$ 
 is attained for the weights $b_1=2$, $b_2=4$ and $b_3=3$.
\qed
\end{example}

\tw{Combining Lemma~\ref{bestP-2D-diagonalB} and Lemma~\ref{bestP-2D} we have
\begin{corollary}
Let $\CC\in\C^{2\times 2}$ be a diagonalizable, positive stable matrix.
Then the condition number is minimal among all matrices~$\PP$ satisfying~\eqref{matrixestimate1},
 if~$\PP$ is of form~\eqref{simpleP1} with equal weights, e.g. $b_1=b_2=1$. 
\end{corollary}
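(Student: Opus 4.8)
The plan is to combine the two preceding lemmas, so the proof will be essentially a one-line deduction. First I would invoke Lemma~\ref{bestP-2D-diagonalB}: for $n=2$ every positive definite Hermitian $\PP$ satisfying the matrix inequality~\eqref{matrixestimate1} is already of the form~\eqref{simpleP1}, i.e.\ $\PP = b_1\,w_1\otimes w_1^* + b_2\,w_2\otimes w_2^*$ with weights $b_1,b_2>0$ and $w_1,w_2$ the normalized eigenvectors of $\CC^*$. Consequently, minimizing $\kappa(\PP)$ over the full admissible set coincides with minimizing it over this two-parameter family.

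Then I would apply Lemma~\ref{bestP-2D}, which states precisely that over the family~\eqref{simpleP1} the condition number is minimized by equal weights; since $\kappa(\tau\PP)=\kappa(\PP)$ for every $\tau>0$, one normalizes to $b_1=b_2=1$. Putting the two steps together yields the corollary.

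The only things to watch are conventions and the degenerate case. Lemma~\ref{bestP-2D-diagonalB} is stated via $\PP=\WW\BB\WW^*$ with $\WW$ the eigenvector matrix of $\CC^*$, while Lemma~\ref{bestP-2D} uses $\PP=\WW\diag(b_1,b_2)\WW^*$ after a unitary normalization of the eigenbasis; I would note explicitly that these describe the same set, so no gap appears. If $\CC=\lambda\II$ then~\eqref{matrixestimate1} is vacuous, every orthonormal basis is an eigenbasis, and $\kappa(\PP)$ is minimized (with value $1$) exactly for $b_1=b_2$, again consistent with the statement. There is no real obstacle here: all the analysis — the formulas $\trace\PP(b)=b+1/b$, $\det\PP(b)=1-\alpha^2$, and the resulting minimum at $b=1$ — is already carried out inside the proof of Lemma~\ref{bestP-2D}, and the role of Lemma~\ref{bestP-2D-diagonalB} is exactly to guarantee that the optimum over the restricted family~\eqref{simpleP1} is the global optimum over all admissible $\PP$.
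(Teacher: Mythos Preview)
Your proposal is correct and matches the paper's own approach exactly: the corollary is stated there with the preface ``Combining Lemma~\ref{bestP-2D-diagonalB} and Lemma~\ref{bestP-2D} we have'' and no further proof is given. Your additional remarks on conventions and the degenerate case $\CC=\lambda\II$ are sound and simply make explicit what the paper leaves implicit.
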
}
\tw{
This 2D-result does not generalize to higher dimensions.
Extending the conclusion of Example~\ref{ex:matrixP:3D:2},
 we shall now show that $\PP$ does not necessarily have to be of form~\eqref{simpleP1},
 if its condition number should be minimal:}
\begin{example}
\tw{We consider a special case of Example~\ref{ex:matrixP:3D:2}, with}
 \begin{equation*}
  \widetilde{\CC} = (\WW^*)^{-1} \diag(1,2,3) \WW^*
 \end{equation*}
 with $\WW$, the eigenvector matrix of $\widetilde{\CC}^*$, given by~\eqref{matrix:W}.
Then the matrices $\widetilde{\CC}$ and
 \begin{equation*} 
  \widetilde\PP(b_1,b_2,b_3,\beta) := \WW \PP(b_1,b_2,b_3,\beta) \WW^*
 \end{equation*}
 with matrix~$\PP(b_1,b_2,b_3,\beta)$ in~\eqref{matrix:P:block-diagonal} satisfy the matrix inequality~\eqref{matrixestimate1} with~$\mu=1$.
But $\widetilde\PP$ is not of form~\eqref{simpleP1} if $\beta\ne 0$.
Nevertheless, the condition number $\kappa(\widetilde\PP(b_1,b_2,b_3,\beta)) \approx 5.82842780720132$ 
 for the weights $b_1=2$, $b_2=4$, $b_3=3$, and $\beta=-2.45$,
 is much lower than with $\beta=0$
 (i.e. $\kappa(\widetilde\PP(2,4,3,0)) \approx 13.92820324$, cf. Example~\ref{ex:matrixP:3D:2}).
\qed
\end{example}
 
\tw{Lemma~\ref{bestP-2D} and inequality~\eqref{ODE:exponentialDecay} show that
 $\sqrt{\kappa_{\min}}$ from~\eqref{kappa:min} is an \emph{upper bound} for the best constant in~\eqref{exp-decay:c} for the 2D case.
For matrices with eigenvalues that have the same real part
 it actually yields the minimal multiplicative constant~$c$,
 as we shall show now.
Other cases will be discussed in \S\ref{sec:min:2}.}

\tw{For a diagonalizable matrix $\CC\in\C^{2\times 2}$ with $\lambda_1^\CC=\lambda_2^\CC$ it holds that 
$\|f(t)\|_2 = e^{-\Re \lambda_1^\CC t}  \|f^I\|_2$. 
And for the general case we have:
}
\tw{
\begin{theorem}\label{thm:bestConst-2D}
Let $\CC\in\C^{2\times 2}$ be a diagonalizable, positive stable matrix with eigenvalues $\lambda_1^\CC \ne\lambda_2^\CC$,
  and associated eigenvectors $v_1$ and $v_2$, resp.
If the eigenvalues have identical real parts, i.e. $\Re\lambda_1^\CC=\Re\lambda_2^\CC$,
 then the condition number of the associated matrix~$\PP$ in~\eqref{simpleP1} with equal weights, e.g. $b_1=b_2=1$,
 yields the minimal constant 
 in the decay estimate~\eqref{exp-decay:c} for the ODE \eqref{ODE:general:n}:
 \begin{equation} \label{min:c:IR+DI} 
   c =\sqrt{\kappa(\PP)} =\sqrt{\frac{1+\alpha}{1-\alpha}} \quad \text{where } \alpha := \Big|\Big\langle {\frac{v_1}{\|v_1\|}}\,,{\frac{v_2}{\|v_2\|}} \Big\rangle\Big| \,.
 \end{equation}
\end{theorem}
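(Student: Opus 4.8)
The plan is to prove the two inequalities $c\le\sqrt{\kappa(\PP)}$ and $c\ge\sqrt{\kappa(\PP)}$ separately, where $\PP$ denotes the equal-weight matrix of~\eqref{simpleP1}, and then to evaluate $\kappa(\PP)$. The upper bound is immediate from the machinery already set up: by Lemma~\ref{bestP-2D-diagonalB} every admissible $\PP$ is of the form~\eqref{simpleP1}, Lemma~\ref{bestP-2D} singles out the equal-weight choice as the one of minimal condition number among these, and substituting it into the chain~\eqref{ODE:exponentialDecay} gives $\|f(t)\|_2\le\sqrt{\kappa(\PP)}\,e^{-\mu t}\|f^I\|_2$, hence $c\le\sqrt{\kappa(\PP)}$. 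The proof of Lemma~\ref{bestP-2D} also hands us $\kappa(\PP)=(1+\alpha^*)/(1-\alpha^*)$, with $\alpha^*$ the modulus of the cosine of the angle between the normalized eigenvectors of $\CC^*$; to reconcile this with the $\alpha$ of the statement I would invoke biorthogonality: since $\lambda_1^\CC\ne\lambda_2^\CC$, the eigenvectors $w_j$ of $\CC^*$ can be chosen with $w_1\perp v_2$ and $w_2\perp v_1$, and in $\C^2$ the conjugate-linear isometry $(z_1,z_2)\mapsto(-\overline{z_2},\overline{z_1})$ carries a line onto its orthogonal complement while preserving moduli of inner products, whence $\alpha^*=\alpha$. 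So $c\le\sqrt{(1+\alpha)/(1-\alpha)}$.

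For the reverse inequality I would argue directly, showing that the chain~\eqref{ODE:exponentialDecay} can be saturated simultaneously at a finite time. Diagonalizing $\CC$ and expanding $f^I=a_1v_1+a_2v_2$, set $\omega_j:=\Im\lambda_j^\CC$ and $\omega:=\omega_1-\omega_2$; since $\lambda_1^\CC\ne\lambda_2^\CC$ while $\Re\lambda_1^\CC=\Re\lambda_2^\CC=\mu$, we have $\omega\ne0$. Then $f(t)=e^{-\mu t}(a_1e^{-\ii\omega_1 t}v_1+a_2e^{-\ii\omega_2 t}v_2)$ and
\[
  e^{2\mu t}\|f(t)\|_2^2 = A + 2\Re\!\big(e^{-\ii\omega t}B\big),\qquad A := |a_1|^2\|v_1\|^2+|a_2|^2\|v_2\|^2,\quad B := a_1\overline{a_2}\,\ip{v_1}{v_2}.
\]
Because $\omega\ne0$, the factor $e^{-\ii\omega t}$ runs through the whole unit circle on $[0,2\pi/|\omega|)$, so $\sup_{t\ge0}e^{2\mu t}\|f(t)\|_2^2=A+2|B|$, attained at a \emph{finite} $t_0\ge0$; consequently the optimal constant is $c^2=\sup_{f^I\ne0}(A+2|B|)/(A+2\Re B)$, where $A+2\Re B=\|f^I\|_2^2>0$. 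The outer optimization is one-variable calculus plus Cauchy--Schwarz: the ratio is largest when $\Re B=-|B|$ (arrange $a_1\overline{a_2}\ip{v_1}{v_2}$ to be negative real by rotating $\arg a_1$), giving $(A+2|B|)/(A-2|B|)$, which increases in $|B|/A$; Cauchy--Schwarz gives $|B|\le\alpha\,|a_1|\,|a_2|\,\|v_1\|\,\|v_2\|$ and the arithmetic--geometric mean inequality gives $A\ge2\,|a_1|\,|a_2|\,\|v_1\|\,\|v_2\|$, so $|B|/A\le\alpha/2$ with equality for $|a_1|\,\|v_1\|=|a_2|\,\|v_2\|$, while $\alpha<1$ keeps the denominator positive. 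Picking $a_1,a_2$ that realize both equalities gives $c^2=(1+\alpha)/(1-\alpha)$, matching the upper bound, so $c=\sqrt{\kappa(\PP)}=\sqrt{(1+\alpha)/(1-\alpha)}$.

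I expect the main obstacle to be the (easy to overlook) point that the supremum over $t$ is \emph{attained at a finite $t_0$} rather than merely approached as $t\to\infty$: this is exactly what places the present case in the ``first scenario'' of \S\ref{sec:Lyap} --- all three inequalities of~\eqref{ODE:exponentialDecay} become equalities at once --- and it rests entirely on $\omega_1\ne\omega_2$. When $\Re\lambda_1^\CC<\Re\lambda_2^\CC$ this fails, which is why that regime is deferred to \S\ref{sec:min:2}. The identity $\alpha^*=\alpha$ and the final optimization are routine.
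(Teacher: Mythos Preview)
Your proof is correct, and for the sharpness direction it takes a genuinely different route from the paper.

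The paper establishes sharpness by staying inside the Lyapunov framework: it computes the eigenvectors $y_\pm^\PP$ of the optimal matrix $\PP=\PP(1)$, takes $f^I=y_+^\PP$, and then checks, via $\WW^*y_\pm^\PP=\sqrt{1-\alpha^2}\binom{1}{\pm1}$ and the diagonalization of $\CC$, that $f(t_0)=e^{-\lambda_1^\CC t_0}\,y_-^\PP$ at $t_0=\pi/|\Im(\lambda_2^\CC-\lambda_1^\CC)|$. Thus the first and third inequalities in~\eqref{ODE:exponentialDecay} are equalities (at $t_0$ and $t=0$ respectively), while the middle one is an identity for all $t$ by~\eqref{est:middle}. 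You instead bypass $\PP$ entirely for the lower bound: writing $e^{2\mu t}\|f(t)\|_2^2=A+2\Re(e^{-\ii\omega t}B)$ and optimizing first over $t$ (periodicity, since $\omega\ne0$) and then over $(a_1,a_2)$ via AM--GM gives $c^2=(1+\alpha)/(1-\alpha)$ directly. Your argument is more elementary and self-contained --- it is essentially a two-line envelope computation, close in spirit to Proposition~\ref{bestConst-2D:3} --- whereas the paper's argument has the advantage of making transparent \emph{why} the equal-weight $\PP$ is the right one: the extremal trajectory literally travels from the $\lambda_{\max}^\PP$-eigenspace to the $\lambda_{\min}^\PP$-eigenspace. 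You also address explicitly the identification of the two $\alpha$'s (angle between eigenvectors of $\CC$ versus of $\CC^*$) via the conjugate-linear isometry of $\C^2$ onto orthogonal complements; the paper leaves this implicit.
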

\begin{proof}
With the notation from the proof of Lemma \ref{bestP-2D} we have
 \begin{equation*} 
  \PP(1) = \begin{pmatrix}
            1 + \alpha^2 & \alpha \sqrt{1-\alpha^2} \\
            \alpha \sqrt{1-\alpha^2} & 1-\alpha^2
           \end{pmatrix}\,,
 \end{equation*}
with the eigenvectors $y_+^\PP=(\sqrt{1-\alpha^2},1-\alpha)^\top$, $y_-^\PP=(\sqrt{1-\alpha^2},-1-\alpha)^\top$. According to the discussion after \eqref{ODE:exponentialDecay} we choose the initial condition $f^I=y_+^\PP$. From the diagonalization \eqref{C*:JordanForm} of $\CC$ we get
$$
  f(t)=(\WW^*)^{-1} e^{-\DD t}\WW^* f^I\,.
$$
Using \eqref{matrixW} and $\WW^* y_\pm^\PP=\sqrt{1-\alpha^2} \binom{1}{\pm1}$ we obtain directly that
$$
  f(t_0)=e^{-\lambda_1^\CC\,t_0} y_-^\PP\quad \mbox{with }\:t_0=\frac{\pi}{|\Im(\lambda_2^\CC-\lambda_1^\CC)|}\,.
$$
Hence, also the first inequality in \eqref{ODE:exponentialDecay} is sharp at $t_0$. Sharpness of the whole chain of inequalities then follows from \eqref{est:middle}, and this finishes the proof.
 \qed
\end{proof}
}

This theorem now allows us to identify the minimal constant $c$ in Theorem~\ref{th:bgk} \tw{on the Goldstein-Taylor model}:
\tw{The eigenvalues of the matrices $\CC_k,\,k\ne0$ from \eqref{2vel-trans} are $\lambda=\frac12\pm i\sqrt{k^2-\frac14}$.} 
The corresponding transformation matrices $\PP_k$ with $b_1=b_2=1$ are given by $\PP_0=\II$ and 
\begin{equation*}
    \PP_k=\begin{pmatrix}
    1 & -\frac{i}{2k} \\
    \frac{i}{2k} & 1
    \end{pmatrix}\,,
\quad\mbox{ with }\quad \kappa(\PP_k)=\frac{2|k|+1}{2|k|-1}\,,\qquad k\ne0\,.
\end{equation*}
Combining the decay estimates for all Fourier modes $u_k(t)$ shows 
that the minimal multiplicative constant in Theorem \ref{th:bgk} is given by $c=\sqrt{\kappa(\PP_{\pm1})}=\sqrt3$. 
For a more detailed presentation 
 how to recombine the modal estimates
 we refer to \S4.1 in \cite{AAC16}.
\medskip

\section{Optimal Constant for 2D Systems}\label{sec:min:2}
\tw{
The optimal constant $c$ in~\eqref{exp-decay:c} for $\CC\in\C^{2\times 2}$ with $\Re\lambda_1^\CC=\Re\lambda_2^\CC$ was determined in Theorem~\ref{thm:bestConst-2D}.
In this section we shall discuss the remaining 2D cases.
We start to derive the minimal multiplicative constant~$c$ for matrices~$\CC$ with eigenvalues
 that have distinct real parts but identical imaginary parts.
\begin{theorem}\label{thm:bestConst-2D:2}
 Let $\CC\in\C^{2\times 2}$ be a diagonalizable, positive stable matrix with eigenvalues $\lambda_1^\CC$ and $\lambda_2^\CC$,
  and associated eigenvectors $v_1$ and $v_2$, resp.
 If the eigenvalues have distinct real parts $\Re\lambda_1^\CC <\Re\lambda_2^\CC$ and identical imaginary parts $\Im\lambda_1^\CC=\Im\lambda_2^\CC$,
  then the minimal multiplicative constant~$c$ in~\eqref{exp-decay:c} for the ODE~\eqref{ODE:general:n}
  is given by
  \begin{equation} \label{min:c:DR+II} 
   c = \frac1{\sqrt{1-\alpha^2}} \quad \text{where } \alpha := \Big|\Big\langle {\frac{v_1}{\|v_1\|}}\,,{\frac{v_2}{\|v_2\|}} \Big\rangle\Big| \,.
  \end{equation}
\end{theorem}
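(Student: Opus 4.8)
The plan is to bypass the Lyapunov-matrix machinery of \S\ref{sec:min} --- which here only delivers the \emph{upper} bound $\sqrt{\kappa(\PP)}=\sqrt{(1+\alpha)/(1-\alpha)}$, strictly larger than the claimed $c$ --- and to compute the minimal constant directly as
\[
  c=\sup_{t\ge0}\, e^{\mu t}\,\norm{e^{-\CC t}}_2 ,\qquad \mu:=\Re\lambda_1^\CC=\min_j\Re\lambda_j^\CC ,
\]
where $\norm{\cdot}_2$ denotes the Euclidean operator norm; this is exactly the smallest $c$ for which $\eqref{exp-decay:c}$ holds, since for each fixed $t$ the ratio $\norm{f(t)}_2/\norm{f^I}_2$ is maximized over $f^I$ by $\norm{e^{-\CC t}}_2$. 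A direct argument is unavoidable because the matrix inequality $\eqref{matrixestimate1}$ is \emph{strict} when $\Re\lambda_1^\CC\ne\Re\lambda_2^\CC$, so the identity $\eqref{est:middle}$ fails and the chain $\eqref{ODE:exponentialDecay}$ is not sharp for the optimal $\PP$.

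First I would normalize as in the proof of Lemma~\ref{bestP-2D}: after a unitary change of coordinates --- which leaves $\norm{e^{-\CC t}}_2$, the eigenvalues, and $\alpha$ unchanged --- the normalized eigenvectors of $\CC^*$ become $w_1=\binom10$ (belonging to $\overline{\lambda_1^\CC}$) and $w_2=\binom{\alpha}{\sqrt{1-\alpha^2}}$ (belonging to $\overline{\lambda_2^\CC}$), with $\alpha\in[0,1)$ equal to the quantity in $\eqref{kappa:min}$, which by the bi-orthogonality $w_i\perp v_j$ $(i\ne j)$ coincides with $|\langle v_1/\norm{v_1},v_2/\norm{v_2}\rangle|$ --- exactly as in Theorem~\ref{thm:bestConst-2D}. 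Writing $\WW=(w_1\,|\,w_2)$ and $\DD=\diag(\lambda_1^\CC,\lambda_2^\CC)$ as in $\eqref{C*:JordanForm}$, one has $\CC=(\WW^*)^{-1}\DD\,\WW^*$, hence $e^{-\CC t}=(\WW^*)^{-1}e^{-\DD t}\WW^*$. Using $\Re\lambda_1^\CC=\mu$, $\Im\lambda_1^\CC=\Im\lambda_2^\CC=:\nu$ and $\Re\lambda_2^\CC>\mu$, a short computation gives $e^{\mu t}e^{-\DD t}=e^{-\ii\nu t}\diag(1,s)$ with $s:=e^{-(\Re\lambda_2^\CC-\mu)t}$, and $s$ runs over $(0,1]$ as $t$ runs over $[0,\infty)$. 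The unimodular scalar drops out of the norm, so $e^{\mu t}\norm{e^{-\CC t}}_2=\norm{(\WW^*)^{-1}\diag(1,s)\,\WW^*}_2$.

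The key observation is that, since $\diag(1,s)=s\II+(1-s)\diag(1,0)$,
\[
  (\WW^*)^{-1}\diag(1,s)\,\WW^*=s\,\II+(1-s)\,\Pi ,\qquad \Pi:=(\WW^*)^{-1}\diag(1,0)\,\WW^* ,
\]
where $\Pi$ is the (oblique) spectral projection of $\CC$ onto $\linspan(v_1)$ along $\linspan(v_2)$; being a nonzero idempotent, $\norm{\Pi}_2\ge1$. For $s\in[0,1]$ the triangle inequality then yields $\norm{s\II+(1-s)\Pi}_2\le s+(1-s)\norm{\Pi}_2=\norm{\Pi}_2-s(\norm{\Pi}_2-1)\le\norm{\Pi}_2$, while $\norm{s\II+(1-s)\Pi}_2\to\norm{\Pi}_2$ as $s\to0^+$ by continuity. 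Hence $c=\sup_{s\in(0,1]}\norm{s\II+(1-s)\Pi}_2=\norm{\Pi}_2$, the supremum being approached only in the limit $t\to\infty$ (for $\alpha>0$) --- which is precisely what distinguishes this regime from Theorem~\ref{thm:bestConst-2D}. Finally, in the normalized coordinates $\Pi=\left(\begin{smallmatrix}1&0\\[2pt]-\alpha/\sqrt{1-\alpha^2}&0\end{smallmatrix}\right)$, so $\norm{\Pi}_2$ equals the Euclidean length of its only nonzero column, $\sqrt{1+\alpha^2/(1-\alpha^2)}=1/\sqrt{1-\alpha^2}$, which is $\eqref{min:c:DR+II}$. (As a consistency check, $1/\sqrt{1-\alpha^2}\le\sqrt{(1+\alpha)/(1-\alpha)}$, the bound from \S\ref{sec:min}.)

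The main obstacle is conceptual rather than computational: one must recognize that in this case the optimal constant equals the $t\to\infty$ asymptotics of $e^{\mu t}\norm{e^{-\CC t}}_2$, i.e. the norm of the dominant spectral projection, rather than a value attained at some finite $t_0$ as in Theorem~\ref{thm:bestConst-2D}; once this is seen, the estimate $\norm{s\II+(1-s)\Pi}_2\le\norm{\Pi}_2$ makes the argument immediate. The only minor technical point, identifying the normalization parameter with $|\langle v_1/\norm{v_1},v_2/\norm{v_2}\rangle|$, is already implicit in the statement and proof of Theorem~\ref{thm:bestConst-2D}.
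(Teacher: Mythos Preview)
Your proof is correct, and it takes a genuinely different route from the paper's.

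The paper stays within its Lyapunov-functional framework: it reduces to the real case, partitions the plane into invariant sectors $\cS^\pm_\gamma$ bounded by the eigendirections, refines the estimate \eqref{ODE:exponentialDecay} to each sector (replacing $\kappa(\PP)$ by a sector-dependent ratio $c_\cS(\PP)=\lambda^\PP_{init,\,\cS}/\lambda^\PP_{\min,\,\cS}$), and then performs a two-parameter optimization $\inf_\PP\sup_\cS c_\cS(\PP)$ via an auxiliary rational function $g$. The infimum over $\PP=\PP(b)$ is realized only in the degenerate limit $b\to0$, and sharpness is finally checked on the explicit trajectory with $f^I=w_1$. This keeps the argument in the spirit of ``find the right Lyapunov functional'', but at the cost of several pages of case analysis.

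You instead compute $c=\sup_{t\ge0}e^{\mu t}\norm{e^{-\CC t}}_2$ directly. The decisive observation --- that under $\Im\lambda_1^\CC=\Im\lambda_2^\CC$ the rescaled propagator is the \emph{real} convex combination $s\II+(1-s)\Pi$, $s\in(0,1]$, of the identity and the oblique spectral projection $\Pi$ onto the slow eigenspace --- reduces the whole problem to the one-line estimate $\norm{s\II+(1-s)\Pi}_2\le s+(1-s)\norm{\Pi}_2\le\norm{\Pi}_2$ (using $\norm{\Pi}_2\ge1$), with equality in the limit $s\to0$. The identification $c=\norm{\Pi}_2=1/\sqrt{1-\alpha^2}$ then follows from a two-line matrix computation. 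This is shorter and more conceptual: it explains the constant as the norm of the dominant spectral projection, and it makes transparent why the optimum is attained only as $t\to\infty$ (monotone decrease in $s$). The trade-off is that it steps outside the paper's declared aim of ``refraining from explicit computations of the solution''; but the paper itself falls back on the explicit solution both in Step~5 of its proof and in Proposition~\ref{bestConst-2D:3}, so this is a mild objection.
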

}
\begin{proof}
\tw{We use again the unitary transformation as in the proof of Lemma~\ref{bestP-2D},
 such that the eigenvectors $w_1$ and $w_2$ of $\CC^*$ are given in~\eqref{eigenvectors:newBasis}.
If $f(t)$ is a solution of~\eqref{ODE:general:n}, then $\tilde{f}(t) := e^{i \Im\lambda_1^\CC t} f(t)$ satisfies
 \begin{equation} \label{ODE:realEV}
  \ddt \tilde{f}(t) = -\widetilde{\CC} \tilde{f}(t)\,, \quad 
  \tilde{f}(0) = f^I \,,
 \end{equation}
 with
 \[
  \widetilde{\CC} := (\CC -i \Im\lambda_1^\CC \II) = (\WW^*)^{-1} \begin{pmatrix} \Re\lambda_1^\CC & 0 \\ 0 & \Re\lambda_2^\CC \end{pmatrix} \WW^* \,.
 \]
The multiplication with $e^{i \Im\lambda_1^\CC t}$ is another unitary transformation 
 and does not change the norm, i.e. $\|f(t)\|_2 = \|\tilde{f}(t)\|_2$.
Therefore, we can assume w.l.o.g. that matrix~$\CC$ has real coefficients \underline{and} distinct real eigenvalues.
Then, 
 the solution $f(t)$ of the ODE~\eqref{ODE:general:n}
 satisfies $\Re f(t) = f_{re}(t)$ and $\Im f(t) = f_{im}(t)$
 where $f_{re}(t)$ and $f_{im}(t)$ are the solutions of the ODE~\eqref{ODE:general:n} with initial data $\Re f^I$ and $\Im f^I$, resp.
Altogether, we can assume w.l.o.g. that \underline{all} quantities are real valued:
\newline
Considering a matrix $\CC\in\R^{2\times 2}$ with two distinct real eigenvalues $\lambda_1<\lambda_2$ and real eigenvectors $v_1$ and $v_2$,
 then the associated eigenspaces $\linspan\{v_1\}$ and $\linspan\{v_2\}$ dissect the plane into four sectors
 \begin{equation} \label{def:sectors}
  \cS^{\pm \mp} := \{ z_1 v_1 +z_2 v_2 \ |\ z_1\in\R^\pm\,, \ z_2\in\R^\mp \} \,,
 \end{equation}
see Fig.~\ref{fig:SectorGamma}.
A solution~$f(t)$ of ODE~\eqref{ODE:general:n} starting in an eigenspace will approach the origin in a straight line, such that 
 \begin{equation} \label{decay}
 \|f(t)\|^2_2 = e^{-2\lambda_j^\CC t} \|f^I \|^2_2 \qquad \forall t\geq 0\,. 
 \end{equation}
If a solution starts instead in one of the four (open) sectors $\cS^{\pm \mp}$, 
 it will remain in that sector while approaching the origin.
In fact, since $\lambda_1^\CC<\lambda_2^\CC$,
 if $f^I = z_1 (v_1 +\gamma v_2)$ for some $z_1 \in\R\setminus\{0\}$ and $\gamma \in\R$,
 then the solution 
 \[ f(t) = z_1 \big( e^{-\lambda_1^\CC t} v_1 + \gamma e^{-\lambda_2^\CC t} v_2 \big)
         = z_1 e^{-\lambda_1^\CC t} \big( v_1 + \gamma e^{-(\lambda_2^\CC -\lambda_1^\CC) t} v_2 \big)
 \]
 of the ODE~\eqref{ODE:general:n} will remain in the sector
 \begin{equation} \label{def:sectors:gamma}
  \cS^{\pm}_\gamma := \{ z_1( v_1 +z_2 v_2) \ |\ z_1\in\R^\pm\,, \ z_2\in [\min(0,\gamma),\max(0,\gamma)] \} \,,
 \end{equation}
 see Fig.~\ref{fig:SectorGamma}.
\begin{figure}[!th]
\includegraphics[width=\textwidth]{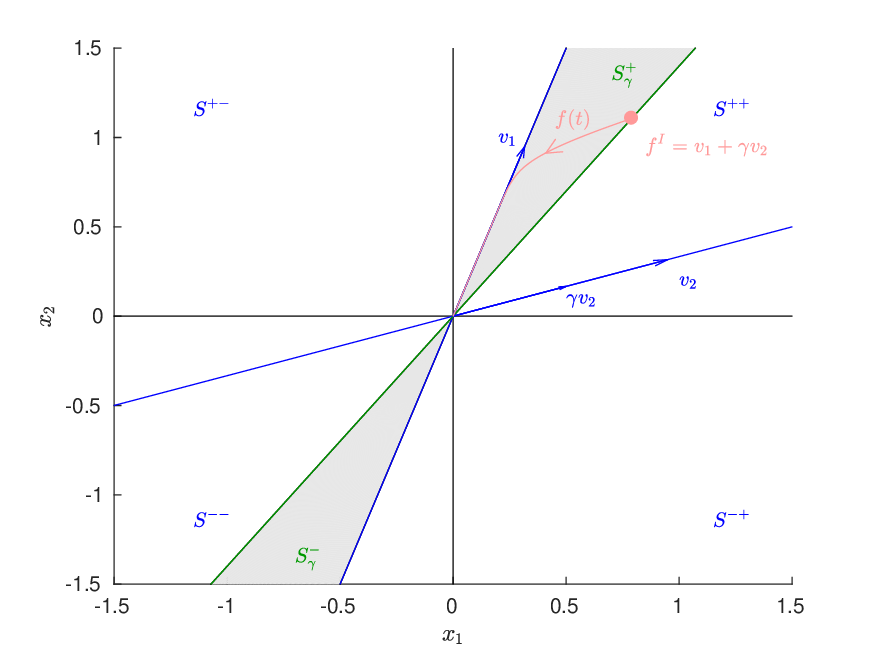}
\caption{\tw{
 The blue (black) lines are the eigenspaces $\linspan\{v_1\}$ and $\linspan\{v_2\}$ of matrix~$\CC$.
 The red (grey) curve is a solution $f(t)$ of the ODE~\eqref{ODE:general:n} with initial datum $f^I$.
 The \emph{shaded regions} are the sectors~$S^+_\gamma$, $S^-_\gamma$ with the choice $\gamma=1/2$.
 Note: The curves are colored only in the electronic version of this article.}
}
\label{fig:SectorGamma}
\end{figure} 
}
\tw{For a fixed $f^I =z_1 (v_1 +\gamma v_2)$, let $\cS$ be the corresponding sector $\cS^{\pm}_\gamma$. 
Then estimate~\eqref{ODE:exponentialDecay} can be improved as follows
\begin{equation} \label{ODE:exponentialDecay:2}
 \|f(t)\|^2_2\leq \frac{1}{\lambda^\PP_{\min,\, \cS}} \|f(t)\|^2_\PP \leq \frac{e^{-2\mu t}}{\lambda^\PP_{\min,\, \cS}}  \|f^I \|^2_\PP \leq  c_\cS (\PP)\, e^{-2\mu t} \|f^I \|^2_2 \,,\quad t\ge0\,,
\end{equation}  
where 
\begin{equation} \label{def:kappa:S}
   \lambda^\PP_{\min,\, \cS} := \inf_{x\in\cS} \frac{\ip{x}{\PP x}}{\ip{x}{x}}\,, \quad 
   \lambda^\PP_{init,\, \cS} := \frac{\ip{f^I}{\PP f^I}}{\ip{f^I}{f^I}}\,, \quad 
   c_\cS (\PP) := \frac{\lambda^\PP_{init,\, \cS}}{\lambda^\PP_{\min,\, \cS}} \,.
\end{equation}
Note that, in the definition of $\lambda^\PP_{init,\, \cS}$
 the sector $\cS\in\left\{\cS_\gamma^\pm\big| \gamma\in\R \right\}$ also determines corresponding initial conditions  $f^I\in\partial \cS$ via $f^I =z_1 (v_1 +\gamma v_2)$
 (up to the constant $z_1\ne0$ which drops out in $\lambda^\PP_{init,\, \cS}$).}

\tw{For \eqref{ODE:exponentialDecay:2} to hold for all trajectories and one fixed constant on the right hand side,
 we have to take the supremum over all initial conditions or, equivalently,
 over all sectors $\cS\in\left\{\cS_\gamma^\pm\big| \gamma\in\R \right\}$. 
Although $f^I=z_2v_2$ is not included in any sector $\cS_\gamma^+$,
 its corresponding multiplicative constant 1 (see \eqref{decay}) is still covered.
Then, the minimal multiplicative constant in~\eqref{exp-decay:c} 
 using~\eqref{ODE:exponentialDecay:2} is 
 \begin{equation} \label{best:constant:S}
  \widetilde{c} = \sqrt{\inf_{\PP}\, \sup_{\cS}\, c_\cS (\PP)} \,,
 \end{equation}
where $\PP$ ranges over all matrices of the form \eqref{simpleP1}.}\\

\noindent\tw{
\underline{Step 1} (computation of $\lambda_{\min,\, \cS^+_\gamma}^{\PP}$ for $\gamma$ fixed):
To find an explicit expression for this minimal constant $c$,
 we first determine $c_\cS (\PP)$ for a given admissible matrix~$\PP$.
As an example of sectors, we consider only $\cS^+_\gamma$ for fixed $\gamma\le0$ and compute 
\begin{align*}
 \lambda_{\min,\, \cS^+_\gamma}^{\PP} 
  &= \inf_{x\in \cS^+_\gamma} \frac{\ip{x}{\PP x}}{\norm{x}^2} 
   = \inf_{z_1\in\R^+,\, z_2\in[\gamma,0]} \frac{\ip{z_1 (v_1 +z_2 v_2)}{\PP (z_1 (v_1 +z_2 v_2))}}{\norm{z_1 (v_1 +z_2 v_2)}^2} \\
  &= \inf_{z_2\in[\gamma,0]} \frac{\ip{v_1 +z_2 v_2}{\PP (v_1 +z_2 v_2)}}{\norm{v_1 +z_2 v_2}^2} \,.
\end{align*}
This also shows that $\lambda_{\min,\, \cS^+_\gamma}^{\PP} =\lambda_{\min,\, \cS^-_\gamma}^{\PP}$
 for any fixed $\gamma\in\R$.
Next, we use the result of Lemma~\ref{bestP-2D-diagonalB} and~\eqref{def:P:matrixW:2},
 stating that the only admissible matrices are $\PP = \WW \diag(b_1,b_2) \WW^*$ for $b_1,b_2>0$.
Since $c_{\cS} (b\PP) = c_{\cS} (\PP)$ for all $b>0$,
 we consider w.l.o.g. $b_1=1/b$ and $b_2=b$ for $b>0$.
Then, we deduce
\begin{align*}
 \lambda_{\min,\, \cS^+_\gamma}^{\PP} 
  &= \inf_{z\in[\gamma,0]} \frac{\ip{v_1 +z v_2}{\PP (v_1 +z v_2)}}{\norm{v_1 +z v_2}^2} \\
  &= \inf_{z\in[\gamma,0]} \frac{\ip{\WW^* (v_1 +z v_2)}{\diag(1/b,b) \WW^* (v_1 +z v_2)}}{\norm{v_1 +z v_2}^2} \,.
\end{align*}
In our case of a real matrix~$\CC$ with distinct real eigenvalues, 
 the left and right eigenvectors are related as follows:
Up to a change of orientation, $\ip{w_j}{v_k} =\delta_{jk}$ ($j,k\in\{1,2\}$).
Considering $\ip{w_j}{v_j}=1$ for $j=1,2$, implies that the vectors $w_j$ and $v_j$ can be normalized simultaneously only if matrix~$\CC$ is symmetric.
Therefore, using a coordinate system such that the normalized eigenvectors of $\CC^*$ are given as~\eqref{eigenvectors:newBasis} and $\VV:=(v_1|v_2) =(\WW^*)^{-1}$ yields
 \[ 
 v_1 = \frac1{\sqrt{1-\alpha^2}} \begin{pmatrix} \sqrt{1-\alpha^2} \\ -\alpha \end{pmatrix}\,, \quad
 v_2 = \frac1{\sqrt{1-\alpha^2}} \begin{pmatrix} 0 \\ 1 \end{pmatrix} \quad \text{for $\alpha$ in~\eqref{eigenvectors:newBasis}.} 
 \]
Finally, we obtain
\[
 \lambda_{\min,\, \cS^+_\gamma}^{\PP} 
   = \inf_{z\in[\gamma,0]} \frac{\ip{\WW^* (v_1 +z v_2)}{\diag(1/b,b) \WW^* (v_1 +z v_2)}}{\norm{v_1 +z v_2}^2} 
   = \inf_{z\in[\gamma,0]} g(z)
\]
and $\lambda^\PP_{init,\, \cS^+_\gamma} = g(\gamma)$ with 
\begin{equation} \label{def:g} 
 g(z) := \frac{(1-\alpha^2)\, (\frac{1}{b} + b z^2)}{1 -2\alpha z + z^2} \,.
\end{equation}
}

\noindent\tw{
\underline{Step 2} (extrema of the function $g$):
The function $g$ has local extrema at
\[
 z_\pm = \frac1{2\alpha b} \Big( b-\frac1b \pm \sqrt{ \big(b-\frac1b\big)^2 + 4\alpha^2} \Big)
\]
 which satisfy $z_- <0 <z_+$.
Writing $g'(z)=h_1(z)/h_2(z)$ with $h_1(z) :=\big(-2\alpha b z^2 + 2\big(b-\frac1b\big) z +\frac2b \alpha\big)$ and $h_2(z) := (1 -2\alpha z + z^2)^2/(1-\alpha^2) >0$,
 we derive
 \[ g''(z_\pm) = \frac{h_1'(z_\pm)}{h_2(z_\pm)} = \mp 2 \frac1{h_2(z_\pm)} \sqrt{ \big(b-\frac1b\big)^2 + 4\alpha^2} \,. \]
In fact, the function $g$ attains its global minimum on $\R$ (and on $\R_0^-$) at $z_-$, and its global maximum on $\R$ at $z_+$.
The global supremum of $g(z)$ on $\R^-$ exists and satisfies
  \[ \sup_{z\in\R^-} g(z)
      = \begin{cases}
         g(0) =(1-\alpha^2)/b &\text{if } b\in(0,1) \,, \\
         g(0) = \lim_{z\to-\infty} g(z) =1-\alpha^2 &\text{if } b=1 \,, \\
         \lim_{z\to-\infty} g(z) =(1-\alpha^2)b &\text{if } b\in(1,\infty) \,.         
        \end{cases}
  \]
}

\noindent\tw{
\underline{Step 3} (optimization of $c_{\cS^\pm_\gamma}(\PP)$ w.r.t.\ $\gamma$):
We obtain
\[
 c_{\cS^\pm_\gamma}(\PP(b))
  = \frac{g(\gamma)}{\lambda_{\min,\, \cS^+_\gamma}^{\PP(b)}}
  = \begin{cases} 
     1 &\text{if } z_- \leq\gamma <0 \,, \\
     g(\gamma)/g(z_-) &\text{if } \gamma\leq z_- \,.
    \end{cases}
\]
Finally, we derive 
\begin{equation} \label{sup:cS:Rminus}
 \sup_{\gamma\in\R^-} c_{\cS^\pm_\gamma}(\PP(b))
  = \lim_{\gamma\to-\infty} \frac{g(\gamma)}{g(z_-)}
  = \frac{(1-\alpha^2)b}{g(z_-)} \,,
\end{equation}
and in a similar way,
\begin{equation} \label{sup:cS:Rplus}
 \sup_{\gamma\in\R^+} c_{\cS^\pm_\gamma}(\PP(b))
  = \frac{g(z_+)}{g(0)}
  = \frac{b g(z_+)}{1-\alpha^2} \,. 
\end{equation}
To finish this analysis we note that $c_{\cS^\pm_0}(\PP(b))=1$, due to \eqref{decay} and $f^I=z_1v_1$.
}

\noindent\tw{
\underline{Step 4} (minimization of $\sup_{\cS}\, c_\cS (\PP)$ w.r.t.\ $\PP$):
We obtain
\[
 \inf_{\PP}\, \sup_{\cS}\, c_\cS (\PP)
  = \inf_{b\in(0,\infty)} \sup_{\gamma\in\R} c_{\cS^\pm_\gamma}(\PP(b)) \\
  = \inf_{b\in(0,\infty)}
      \max\Big\{\frac{(1-\alpha^2)b}{g(z_-)},\, 1,\, \frac{b g(z_+)}{1-\alpha^2}\Big\} \,.
\]
Taking into account the $b$-dependence of $z_\pm$,
 the functions $\frac{(1-\alpha^2)b}{g(z_-)}$ and $\frac{b g(z_+)}{1-\alpha^2}$ are monotone increasing in~$b$,
 since 
 \[
   \ddb \frac{(1-\alpha^2)b}{g(z_-)} >0 \,, \qquad
   \ddb \frac{b g(z_+)}{1-\alpha^2}  >0 \,.
 \]
Therefore we have to study their limits as $b\to 0$:
We derive
\begin{equation} \label{limits:b}
\begin{split} 
 &\lim_{b\to 0} \frac{(1-\alpha^2)b}{g(z_-)} = 1 \qquad \qquad \text{using} \quad \lim_{b\to 0} z_-(b) = -\infty \,, \\
 &\lim_{b\to 0} \frac{b g(z_+)}{1-\alpha^2}  = \frac1{1-\alpha^2} >1 \qquad \text{using} \quad \lim_{b\to 0} z_+(b) = \alpha \,.
\end{split}
\end{equation}
Hence, $\inf_{b\in(0,\infty)}\, \sup_{\gamma\in\R}\, c_{\cS^\pm_\gamma} (\PP(b))$ is realized by 
 the sector $\cS^\pm_\gamma$ with $\gamma=z_+(b)>0$ and in the limit $b\to0$.
Altogether we obtain
 \[
  \widetilde{c} = \sqrt{\inf_{\PP}\, \sup_{\cS}\, c_\cS (\PP)} = \frac1{\sqrt{1-\alpha^2}} \,,
 \]
 where the first equality holds since we discussed all solutions.
This finishes the proof.
}

\noindent\tw{
\underline{Step 5:}
Finally we have to verify that $\widetilde{c}$ is minimal in~\eqref{exp-decay:c}.
We shall show that it is attained asymptotically (as $t\to\infty$) for a concrete trajectory:
For fixed $b\in(0,\infty)$, the minimal multiplicative constant in~\eqref{ODE:exponentialDecay:2} is attained for the solution with initial datum $f^I = v_1 +z_+(b) v_2=y_+^{\PP(b)}$,
 which is the eigenvector pertaining to the largest eigenvalue of $\PP(b)$ (cp. to the proof of Theorem~\ref{thm:bestConst-2D}). 
The formula for $f^I$ holds
 since $\sup_{\cS}\, c_\cS (\PP(b)) = bg(z_+(b))/(1-\alpha^2)$.
This can be verified by a direct comparison of~\eqref{sup:cS:Rminus} and~\eqref{sup:cS:Rplus}.
For $b$ small it also follows from~\eqref{limits:b}.
 In the limit $b\to0$, $\PP(b)$ in~\eqref{def:matrixP:b} approaches a multiple of $w_1 \otimes w_1^*$ and  
  \[ f^I = v_1 +z_+(b) v_2 \longrightarrow v_1 + \alpha v_2 = w_1 \,. \]
 The solution $f(t)$ of the ODE~\eqref{ODE:general:n} with $f^I = w_1$ satisfies 
  \begin{equation} \label{sol:f:w_1}
     f(t) = e^{-\CC t} w_1
      = \VV \begin{pmatrix} e^{-\lambda_1 t} & 0 \\ 0 & e^{-\lambda_2 t} \end{pmatrix} \WW^* w_1
      = e^{-\lambda_1 t} v_1 + \alpha e^{-\lambda_2 t} v_2 \,.
  \end{equation}
 This implies 
  \[ e^{\Re \lambda_1 t} \frac{\|f(t)\|_2}{\|f^I\|_2} 
      \leq \|v_1 +\alpha e^{-\Re(\lambda_2 -\lambda_1) t} v_2 \|_2
      \stackrel{t\to\infty}{\longrightarrow} \|v_1\|_2 =\tfrac1{\sqrt{1-\alpha^2}} 
  \]
  and it finishes the proof.
\qed}
\end{proof}

\tw{After the analysis in Theorems~\ref{thm:bestConst-2D} and \ref{thm:bestConst-2D:2},
 we are left with the case of a matrix~$\CC\in\C^{2\times 2}$ with eigenvalues $\lambda_1$ and $\lambda_2$
 such that the real \underline{and} imaginary parts are distinct.
This case can not occur for real matrices~$\CC$.
The proof of Lemma~\ref{bestP-2D} gives an upper bound~$\sqrt{\tfrac{1+\alpha}{1-\alpha}}$ for the multiplicative constant in~\eqref{exp-decay:c}.
On the other hand, the solution $f(t)$ of the ODE~\eqref{ODE:general:n} with $f^I = w_1$ satisfies~\eqref{sol:f:w_1},
 hence, 
 \begin{align*}
  \|f(t)\|_2^2 
   &= e^{-2\Re \lambda_1 t} \|v_1 +\alpha e^{-(\lambda_2 -\lambda_1) t} v_2 \|_2^2 \\
   &= \tfrac1{1-\alpha^2} e^{-2\Re \lambda_1 t} \Big( 1 -2\alpha^2 e^{-\Re(\lambda_2-\lambda_1)t} \cos\big(\Im(\lambda_2-\lambda_1)t\big) +\alpha^2 e^{-2\Re(\lambda_2-\lambda_1)t} \Big) \,.
 \end{align*}
 The expression in the bracket is bigger than 1, e.g. at time $t=\pi/\Im(\lambda_2-\lambda_1)$.
 Thus the minimal multiplicative constant $c$ is definitely bigger than $\tfrac1{\sqrt{1-\alpha^2}}$, which is the best constant for $\Im \lambda_1=\Im \lambda_2$ (see Theorem~\ref{thm:bestConst-2D:2}). 
}

\medskip
\tw{
Next, we derive the upper and lower envelopes for the norm of solutions~$f(t)$ of ODE~\eqref{ODE:general:n} in order to determine the sharp constant $c$. 
For a diagonalizable matrix $\CC\in\C^{2\times 2}$ with $\lambda_1^\CC=\lambda_2^\CC$ it holds that 
$\|f(t)\|_2 = e^{-\Re \lambda_1^\CC t}  \|f^I\|_2$. 
And for the general case we have:
\begin{proposition}\label{bestConst-2D:3}
 Let $\CC\in\C^{2\times 2}$ be a diagonalizable, positive stable matrix with eigenvalues $\lambda_1^\CC\ne\lambda_2^\CC$,
  and associated eigenvectors $v_1$ and $v_2$, resp.
 Then the norm of solutions $f(t)$ of ODE~\eqref{ODE:general:n} satisfies 
  \[ h_-(t) \|f^I\|^2_2 \leq \|f(t)\|^2_2 \leq h_+(t) \|f^I\|^2_2 \,,\qquad \forall t\geq 0 \,, \]
 where the envelopes $h_\pm(t)$ are given by 
  \begin{equation*} 
   h_\pm(t) := e^{-2\Re \lambda_1^\CC t} m_\pm(t)    
  \end{equation*}
  with
  \[ m_\pm(t)
      :=\pm e^{-\gamma t} \Big( \sqrt{ \frac{\big(\cosh(\gamma t)-\alpha^2 \cos(\delta t)\big)^2}{(1-\alpha^2)^2} -1} \pm \frac{\big(\cosh(\gamma t)-\alpha^2 \cos(\delta t)\big)}{1-\alpha^2} \Big) \,,
  \]
  where $\gamma:=\Re(\lambda_2^\CC -\lambda_1^\CC)$, $\delta:=\Im(\lambda_2^\CC -\lambda_1^\CC)$, $\alpha := \Big|\Big\langle {\frac{v_1}{\|v_1\|}}\,,{\frac{v_2}{\|v_2\|}} \Big\rangle\Big|$
  and $\alpha\in[0,1)$.
\end{proposition}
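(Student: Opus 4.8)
The plan is to observe that, for each fixed $t\ge0$, the desired bounds are governed by the extreme eigenvalues of the positive Hermitian matrix $M(t)^*M(t)$, where $M(t):=e^{-\CC t}$ is the solution operator of~\eqref{ODE:general:n}, i.e.\ $f(t)=M(t)f^I$. Indeed $\|f(t)\|^2_2=\ip{f^I}{M(t)^*M(t)f^I}$, so the Rayleigh quotient bounds give $\lambda_{\min}\big(M(t)^*M(t)\big)\,\|f^I\|^2_2\le\|f(t)\|^2_2\le\lambda_{\max}\big(M(t)^*M(t)\big)\,\|f^I\|^2_2$ for every $f^I$. Since $M(t)^*M(t)\in\C^{2\times2}$ is positive Hermitian, its two eigenvalues are $\tfrac12\big(\trace M(t)^*M(t)\pm\sqrt{(\trace M(t)^*M(t))^2-4\det M(t)^*M(t)}\big)$, so the whole claim reduces to computing $\trace M(t)^*M(t)$ and $\det M(t)^*M(t)$ and matching the outcome with $h_\pm(t)$.

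First I would pass to a convenient coordinate system, exactly as in the proof of Lemma~\ref{bestP-2D}: a unitary change of variables alters neither $\|f(t)\|_2$, $\|f^I\|_2$, nor the eigenvalues $\lambda_j^\CC$, nor the angle $\alpha$, and it lets us assume that the normalized eigenvectors of $\CC^*$ are $w_1,w_2$ as in~\eqref{eigenvectors:newBasis}; in the genuinely complex case one applies in addition a diagonal unitary $\diag(1,e^{\ii\varphi})$ to make the second entry of $w_2$ real and nonnegative. With $\WW=(w_1|w_2)$ as in~\eqref{matrixW} we then have $\CC=(\WW^*)^{-1}\DD\,\WW^*$ with $\DD=\diag(\lambda_1^\CC,\lambda_2^\CC)$, hence $M(t)=(\WW^*)^{-1}e^{-\DD t}\WW^*$ and $M(t)^*M(t)=\WW\,e^{-\DD^* t}(\WW^*\WW)^{-1}e^{-\DD t}\WW^*$.

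Next come the two computations. The determinant is immediate and basis-independent: $\det M(t)^*M(t)=|\det M(t)|^2=|e^{-t\,\trace\CC}|^2=e^{-2\Re(\lambda_1^\CC+\lambda_2^\CC)t}=e^{-4\Re\lambda_1^\CC t}e^{-2\gamma t}$. For the trace I would use cyclic invariance together with $\WW^*\WW=\big(\begin{smallmatrix}1&\alpha\\\alpha&1\end{smallmatrix}\big)$, so that $\trace M(t)^*M(t)=\trace\big((\WW^*\WW)\,e^{-\DD^* t}(\WW^*\WW)^{-1}e^{-\DD t}\big)$; expanding this $2\times2$ product, the diagonal terms contribute $\tfrac1{1-\alpha^2}\big(e^{-2\Re\lambda_1^\CC t}+e^{-2\Re\lambda_2^\CC t}\big)=\tfrac2{1-\alpha^2}e^{-(\Re\lambda_1^\CC+\Re\lambda_2^\CC)t}\cosh(\gamma t)$, while the two off-diagonal $\alpha$-terms contribute $-\tfrac{\alpha^2}{1-\alpha^2}\big(e^{-(\overline{\lambda_1^\CC}+\lambda_2^\CC)t}+e^{-(\lambda_1^\CC+\overline{\lambda_2^\CC})t}\big)=-\tfrac{2\alpha^2}{1-\alpha^2}e^{-(\Re\lambda_1^\CC+\Re\lambda_2^\CC)t}\cos(\delta t)$ (the two exponents are complex conjugates, producing a cosine). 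Using $\Re\lambda_1^\CC+\Re\lambda_2^\CC=2\Re\lambda_1^\CC+\gamma$ this gives $\trace M(t)^*M(t)=\tfrac2{1-\alpha^2}e^{-2\Re\lambda_1^\CC t}e^{-\gamma t}\big(\cosh(\gamma t)-\alpha^2\cos(\delta t)\big)$.

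Finally I would insert these expressions into the quadratic formula for the eigenvalues. Abbreviating $P(t):=\big(\cosh(\gamma t)-\alpha^2\cos(\delta t)\big)/(1-\alpha^2)$, the trace equals $2e^{-2\Re\lambda_1^\CC t}e^{-\gamma t}P(t)$ and the determinant equals $\big(e^{-2\Re\lambda_1^\CC t}e^{-\gamma t}\big)^2$, whence $\lambda_\pm\big(M(t)^*M(t)\big)=e^{-2\Re\lambda_1^\CC t}e^{-\gamma t}\big(P(t)\pm\sqrt{P(t)^2-1}\big)$, which is exactly $e^{-2\Re\lambda_1^\CC t}m_\pm(t)=h_\pm(t)$ once one reads off the definition of $m_\pm$. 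Here one checks $P(t)\ge1$ so the square root is real — this is equivalent to $\cosh(\gamma t)-1\ge\alpha^2(\cos(\delta t)-1)$, which holds since the left side is $\ge0$ and the right side $\le0$ — and $\alpha\in[0,1)$ because $v_1,v_2$ are linearly independent. I do not expect a real obstacle; the only delicate points are the bookkeeping in the trace computation and the verification that the coordinate reduction remains valid in the truly complex case. It is moreover worth noting that, being the eigenvalues of $M(t)^*M(t)$, the functions $h_\pm$ are attained at each $t$ by the corresponding ($t$-dependent) eigenvector, so they are the optimal pointwise envelopes.
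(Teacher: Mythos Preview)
Your argument is correct and takes a genuinely different route from the paper. The paper works with the explicit solution formula: after the same unitary reduction to the canonical eigenvector form~\eqref{eigenvectors:newBasis}, it factors out $e^{\lambda_1^\CC t}$, parametrizes the unit sphere of initial data by two angles $\phi,\theta$ as $f^I_{\phi,\theta}=(\cos\phi,\,\sin\phi\,e^{i\theta})^\top$, computes $\|\tilde f_{\phi,\theta}(t)\|^2$ directly, and then finds the envelopes by solving the critical-point equations $\partial_\phi\|\tilde f_{\phi,\theta}\|^2=\partial_\theta\|\tilde f_{\phi,\theta}\|^2=0$ at each fixed $t$.

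Your approach replaces this calculus optimization by the observation that the pointwise envelopes are precisely the extreme eigenvalues (equivalently, the squared singular values) of the propagator $M(t)=e^{-\CC t}$, and then obtains them from $\trace M(t)^*M(t)$ and $\det M(t)^*M(t)$ via the quadratic formula. This is shorter and more structural: it makes transparent why $h_\pm$ are sharp (they are attained by the $t$-dependent singular vectors), it sidesteps the parametrization of the sphere, and the determinant identity $\det M(t)^*M(t)=e^{-2\Re(\lambda_1^\CC+\lambda_2^\CC)t}$ drops out without any coordinate choice. The paper's approach, on the other hand, yields as a by-product an explicit description of the optimizing initial data in terms of $\phi(t),\theta(t)$, which may be useful if one wants to track how the worst-case direction rotates in time. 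Both proofs rely on the same unitary reduction and ultimately agree on the formulas; the difference lies in whether one optimizes by calculus on the sphere or by linear algebra on $M(t)^*M(t)$.
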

While the rest of the article is based on estimating Lyapunov functionals,
 the following proof will use the explicit solution formula of the ODE.
}
\begin{proof}
\tw{We use again the unitary transformation as in the proof of Lemma~\ref{bestP-2D},
 such that the eigenvectors $w_1$ and $w_2$ of $\CC^*$ are given in~\eqref{eigenvectors:newBasis}.
If $f(t)$ is a solution of~\eqref{ODE:general:n}, then $\tilde{f}(t) = e^{\lambda_1^\CC t} f(t)$ satisfies
 \begin{equation} \label{ODE:reduced}
  \ddt \tilde{f}(t) = -\widetilde{\CC} \tilde{f}(t)\,, \quad 
  \tilde{f}(0) = f^I \,,
 \end{equation}
 with
 \[
  \widetilde{\CC} = (\CC -\lambda_1^\CC \II) = (\WW^*)^{-1} \begin{pmatrix} 0 & 0 \\ 0 & \lambda_2^\CC -\lambda_1^\CC \end{pmatrix} \WW^* \,.
 \]
The explicit solution~$\tilde{f}(t)$ of~\eqref{ODE:reduced} is 
 \[
  \tilde{f}(t)
   = (\WW^*)^{-1} \begin{pmatrix} 1 & 0 \\ 0 & e^{-(\gamma +i \delta) t} \end{pmatrix} \WW^* f^I
   = \begin{pmatrix} f^I_1 \\ \tfrac{\alpha}{\sqrt{1-\alpha^2}} (e^{-(\gamma +i \delta) t} -1) f^I_1 +e^{-(\gamma +i \delta) t} f^I_2 \end{pmatrix} \,,
 \]
 where $\gamma =\Re(\lambda_2^\CC -\lambda_1^\CC)$ and $\delta =\Im(\lambda_2^\CC -\lambda_1^\CC)$.
If the initial data $f^I$ lies in $\R\times\C$
 then the solution will satisfy $\tilde{f}(t) \in\R\times\C$ for all $t\geq 0$.
The multiplication with $\overline{f^I_1}/|f^I_1|$ is another unitary transformation and does not change the norm.
Therefore, to compute the envelope for the norm of solutions $\tilde{f}(t)$ of ODE~\eqref{ODE:reduced}
 we assume w.l.o.g. that 
 \begin{equation} \label{def:fI}
  f^I_{\phi,\theta} = \begin{pmatrix} \cos(\phi) \\ \sin(\phi) e^{i \theta} \end{pmatrix} \in \R\times\C \,,
  \qquad \text{where } \phi,\theta\in [0,2\pi)\,,
 \end{equation} 
 such that $\|f^I_{\phi,\theta}\|=1$.
We consider the solution $\tilde{f}_{\phi,\theta}(t)$ for~\eqref{ODE:reduced} with $f^I=f^I_{\phi,\theta}$.
To compute the envelopes (for fixed $t$), we solve 
 $\partial_\phi \|\tilde{f}_{\phi,\theta}\|^2 =0$ and 
 $\partial_\theta \|\tilde{f}_{\phi,\theta}\|^2 =0$
 in terms of $\phi$ and $\theta$.
Evaluating $\|\tilde{f}_{\phi,\theta}(t)\|^2$ at $\phi=\phi(t)$ and $\theta=\theta(t)$
 yields the envelopes for the norm of solutions $\tilde{f}(t)$ of ODE~\eqref{ODE:reduced}.
Consequently, we derive the envelopes $h_\pm(t) \|f^I\|^2$ for the original problem,
 since $\|f(t)\|_2 = e^{-\Re \lambda_1^\CC t} \|\tilde{f}(t)\|_2$.
\qed}
\end{proof}
\tw{
\begin{corollary}
Let $\CC\in\C^{2\times 2}$ be a diagonalizable, positive stable matrix.
Then the minimal multiplicative constant~$c$ in~\eqref{exp-decay:c} for the ODE~\eqref{ODE:general:n}
 is given by
 \begin{equation} \label{min:c:implicit} 
   c = \sqrt{ \sup_{t\geq 0} m_+(t)}\,,
 \end{equation} 
 where $m_+(t)$ is the function given in Proposition~\ref{bestConst-2D:3}.
\end{corollary}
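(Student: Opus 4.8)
The plan is to obtain the corollary as an essentially immediate consequence of the two-sided envelope in Proposition~\ref{bestConst-2D:3}. Recall that for a positive stable $\CC$ the sharp exponential rate in~\eqref{exp-decay:c} is $\mu=\min\{\Re\lambda:\lambda\text{ eigenvalue of }\CC\}=\Re\lambda_1^\CC$, so the minimal $c$ for which~\eqref{exp-decay:c} holds for \emph{all} solutions $f(t)$ of~\eqref{ODE:general:n} is, by its very definition,
\[
  c \;=\; \sup_{f^I\ne0}\;\sup_{t\ge0}\;\frac{e^{\mu t}\,\|f(t)\|_2}{\|f^I\|_2}\,.
\]
Here I would tacitly assume $\lambda_1^\CC\ne\lambda_2^\CC$ (so that $m_+$ is the function of Proposition~\ref{bestConst-2D:3}); the degenerate case $\lambda_1^\CC=\lambda_2^\CC$ gives $c=1$ directly, as recalled just before that proposition.

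The main step is then short. First I would interchange the two suprema — both are suprema over nonempty index sets, so this is trivial — to write $c^2=\sup_{t\ge0}\big(e^{2\mu t}\,\sup_{f^I\ne0}\|f(t)\|_2^2/\|f^I\|_2^2\big)$. Next I would identify the inner supremum for each fixed $t$: Proposition~\ref{bestConst-2D:3} not only gives $\|f(t)\|_2^2\le h_+(t)\|f^I\|_2^2$ with $h_+(t)=e^{-2\mu t}m_+(t)$, but this bound is \emph{attained}, because the envelope $h_+$ is produced by maximizing $\|\tilde f_{\phi,\theta}(t)\|^2$ over the full family~\eqref{def:fI} of unit initial data. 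Hence $e^{2\mu t}\sup_{f^I\ne0}\|f(t)\|_2^2/\|f^I\|_2^2=m_+(t)$ for every $t$, and therefore $c^2=\sup_{t\ge0}m_+(t)$, which is~\eqref{min:c:implicit}.

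It then remains only to check that the right-hand side is finite, so that $c<\infty$ (that $c\ge1$ is clear, since $m_+(0)=1$). The function $m_+$ is continuous on $[0,\infty)$; when $\gamma:=\Re(\lambda_2^\CC-\lambda_1^\CC)=0$ it is bounded and periodic, and when $\gamma>0$ a short computation gives $m_+(t)\to\frac1{1-\alpha^2}$ as $t\to\infty$, so in all cases $\sup_{t\ge0}m_+(t)$ is finite. As consistency checks I would verify that the case $\gamma=0$ yields $\sup_{t\ge0}m_+(t)=\tfrac{1+\alpha}{1-\alpha}$ (attained when $\cos(\delta t)=-1$), recovering Theorem~\ref{thm:bestConst-2D}, and that the case $\delta:=\Im(\lambda_2^\CC-\lambda_1^\CC)=0$ yields the monotone limit $\sup_{t\ge0}m_+(t)=\tfrac1{1-\alpha^2}$, recovering Theorem~\ref{thm:bestConst-2D:2}.

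There is no genuine obstacle here, as all the analytic work sits inside Proposition~\ref{bestConst-2D:3}. The one point that needs a line of justification rather than being purely formal is the \emph{tightness} of the upper envelope — that for each fixed $t$ the inequality $\|f(t)\|_2^2\le h_+(t)\|f^I\|_2^2$ is an equality for some $f^I$, not merely an upper bound — which is precisely what the optimization over $(\phi,\theta)$ in the proof of the proposition delivers; the finiteness of $\sup_{t\ge0}m_+(t)$ is then elementary.
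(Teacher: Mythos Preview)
Your proposal is correct and follows exactly the approach the paper intends: the corollary is stated in the paper without proof, as an immediate consequence of Proposition~\ref{bestConst-2D:3}, and your argument makes that deduction explicit by swapping the two suprema and invoking the tightness of the upper envelope $h_+$. The additional consistency checks with Theorems~\ref{thm:bestConst-2D} and~\ref{thm:bestConst-2D:2} and the finiteness verification are fine supplementary remarks but not needed for the bare corollary.
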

In general we could not find an explicit formula for $\sup_{t\geq 0} m_+(t)$.
}


\section{A Family of Decay Estimates for Hypocoercive ODEs}\label{sec:ODE-ex}

In this section we shall illustrate the interdependence of maximizing the decay rate $\lambda$ and minimizing the multiplicative constant $c$ in estimates like \eqref{exp-decay:c}. 
For the ODE-system \eqref{ODE:general:n},
 the procedure described in Remark \ref{rem1.2}(b) yields the optimal bound for large time,
 with the sharp decay rate  $\mu:=\min\{\Re(\lambda)|\lambda$ is an eigenvalue of $\CC\}$. 
But for non-coercive $\CC$ we must have $c>1$. 
Hence, such a bound cannot be sharp for short time. 
As a counterexample we consider the simple energy estimate
 (obtained by premultiplying \eqref{ODE:general:n} with $f^*$)
$$
  \|f(t)\|_2 \le e^{-\mu_s t}\|f^I\|_2\,,\quad t\ge0\,,
$$
with $\CC_s:=\frac12(\CC+\CC^*)$ and $\mu_s:=\min\{\lambda|\lambda$ is an eigenvalue of $\CC_s\}$. 

The goal of this section is to derive decay estimates for~\eqref{ODE:general:n} with rates in between this weakest rate $\mu_s$ and the optimal rate $\mu$ from \eqref{ODE:exponentialDecay}. \tw{It holds that $\mu_s\le\mu$.}
At the same time we shall also present \emph{lower bounds} on $\|f(t)\|_2$. 
The energy method again provides the simplest example of it, in the form
$$
  \|f(t)\|_2 \ge e^{-\nu_s t}\|f^I\|_2\,,\quad t\ge0\,,
$$
with $\nu_s:=\max\{\lambda|\lambda$ is an eigenvalue of $\CC_s\}$. Clearly, estimates with decay rates outside of $[\mu_s,\nu_s]$ are irrelevant. 

We present our main result only for the two-dimensional case, as the best multiplicative constant is not yet known explicitly in higher dimensions (cf. \S\ref{sec:min}):

\begin{proposition}\label{prop:5.1}
  Let $\CC\in\C^{2\times 2}$ be a diagonalizable positive stable matrix with spectral gap $\mu:=\min\{\Re(\lambda_j^\CC)|\,j=1,2\}$. 
  Then, all solutions to \eqref{ODE:general:n} satisfy the following upper and lower bounds:
  \begin{itemize}
   \item[a)] 
   \begin{equation}\label{up-bound}
     \|f(t)\|_2 \le c_1(\tilde\mu)\, e^{-\tilde\mu t}\|f^I\|_2\,,\quad t\ge0\,,\quad \mu_s\le\tilde\mu\le\mu\,,
   \end{equation}
   with 
   $$
     c_1^2(\tilde\mu)=\kappa_{\min}(\beta(\tmu))
   $$
   given explicitly in \eqref{kappa-min} below.
   There, $\alpha\in[0,1)$ is the $\cos$ of the (minimal) angle of the eigenvectors of $\CC^*$ (cf. the proof of Lemma \ref{bestP-2D}),
   and $\beta(\tmu)= \max(-\alpha,-\beta_0)$, with $\beta_0$ defined in \eqref{cond2}, \eqref{cond1} below.
   \item[b)] 
   \begin{equation}\label{low-bound}
     \|f(t)\|_2 \ge c_2(\tilde\mu)\, e^{-\tilde\mu t}\|f^I\|_2\,,\quad t\ge0\,,\quad \nu\le\tilde\mu\le\nu_s\,,
   \end{equation}
   with $\nu:=\max\{\Re(\lambda_j^\CC)|\,j=1,2\}$. The maximal constant 
   $$
     c_2^2(\tilde\mu)=\kappa_{\min}(\beta(\tmu))^{-1}
   $$
   is given again by \eqref{kappa-min}, with $\alpha$, $\beta(\tmu)$ defined as in Part (a).
  \end{itemize}
\end{proposition}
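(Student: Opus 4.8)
The plan is to re-run, for each admissible rate $\tmu$, the Lyapunov‑functional argument of Section~\ref{sec:Lyap}, but with the \emph{relaxed} matrix inequality in which the spectral gap $\mu$ is replaced by $\tmu$. For part~(a) I look for positive definite Hermitian matrices $\PP$ with $\CC^*\PP+\PP\CC\ge 2\tmu\PP$; exactly as in the derivation of \eqref{ODE-normP-decay}--\eqref{ODE:exponentialDecay} this gives $\|f(t)\|_\PP^2\le e^{-2\tmu t}\|f^I\|_\PP^2$, hence $\|f(t)\|_2\le\sqrt{\kappa(\PP)}\,e^{-\tmu t}\|f^I\|_2$. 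For part~(b) I look instead for $\PP$ with the reversed inequality $\CC^*\PP+\PP\CC\le 2\tmu\PP$, which yields $\|f(t)\|_\PP^2\ge e^{-2\tmu t}\|f^I\|_\PP^2$ and therefore $\|f(t)\|_2\ge\kappa(\PP)^{-1/2}\,e^{-\tmu t}\|f^I\|_2$. In both cases the best constant produced this way is governed by $\min_\PP\kappa(\PP)$ over the corresponding admissible set, so the two parts reduce to the \emph{same} minimization, and it suffices to solve it once and record the answer and its reciprocal. Note that these admissible sets are nonempty precisely for $\tmu\in[\mu_s,\mu]$, resp. $\tmu\in[\nu,\nu_s]$ (with $\PP=\II$ admissible at the lower end of each range, by the energy identity $\CC^*+\CC=2\CC_s$).

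To make the minimization explicit I follow the reduction in the proof of Lemma~\ref{bestP-2D}: after a unitary change of variables the normalized eigenvectors of $\CC^*$ are $(1,0)^\top$ and $(\alpha,\sqrt{1-\alpha^2})^\top$ with $\alpha\in[0,1)$, and with $\WW$ their eigenvector matrix every positive definite Hermitian $\PP$ is uniquely $\WW\BB\WW^*$ for a positive definite Hermitian $\BB=\begin{pmatrix}b_1&\beta_\BB\\ \overline{\beta_\BB}&b_2\end{pmatrix}$. As in the step leading to \eqref{matrixestimate2}--\eqref{matrixestimate2b}, $\CC^*\PP+\PP\CC-2\tmu\PP=\WW M\WW^*$ with $M=(\DD^*-\tmu\II)\BB+\BB(\DD-\tmu\II)$, whose diagonal entries $2b_j(\re\lambda_j^\CC-\tmu)$ have the correct sign automatically in either regime, since $\tmu\le\mu\le\re\lambda_j^\CC$ in~(a) and $\tmu\ge\nu\ge\re\lambda_j^\CC$ in~(b). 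Hence $M\ge0$ (resp.\ $M\le0$) is in both cases equivalent to the single determinant inequality $|\beta_\BB|^2\,q\le 4\,b_1b_2\,r$ with $q:=(\re\lambda_1^\CC+\re\lambda_2^\CC-2\tmu)^2+(\im\lambda_1^\CC-\im\lambda_2^\CC)^2$ and $r:=\big|(\re\lambda_1^\CC-\tmu)(\re\lambda_2^\CC-\tmu)\big|$. Since $q-4r=(\re\lambda_1^\CC-\re\lambda_2^\CC)^2+(\im\lambda_1^\CC-\im\lambda_2^\CC)^2\ge0$, this forces $|\beta_\BB|/\sqrt{b_1b_2}\le\beta_0$ for a number $\beta_0=2\sqrt{r/q}\in[0,1)$ — this is the content of the conditions~\eqref{cond1}, \eqref{cond2} in the statement (the two forms corresponding to whether the eigenvalues have equal or distinct real parts).

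It remains to minimize $\kappa(\PP)$ over this constrained set. From $\WW^*\WW=\begin{pmatrix}1&\alpha\\\alpha&1\end{pmatrix}$ one gets $\det\PP=(1-\alpha^2)(b_1b_2-|\beta_\BB|^2)$ and $\trace\PP=b_1+b_2+2\alpha\,\re\beta_\BB$, and since $\kappa(\PP)$ is strictly increasing in $(\trace\PP)^2/\det\PP$, for fixed $|\beta_\BB|$ it is smallest when $\re\beta_\BB=-|\beta_\BB|$; so we may assume $\beta_\BB=:\beta\le0$ is real, and (scaling is irrelevant for $\kappa$) take $b_1=1/b$, $b_2=b$, turning the constraint into $\beta\ge-\beta_0$. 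Generalizing \eqref{def:matrixP:b}, $\trace\PP=b+1/b+2\alpha\beta$ is minimal at $b=1$ while $\det\PP=(1-\alpha^2)(1-\beta^2)$, and a short computation gives
\[
 \min_{b>0}\kappa(\PP)=\frac{(1+\alpha)(1+\beta)}{(1-\alpha)(1-\beta)}=:\kappa_{\min}(\beta)\qquad\text{for }\beta\in[-\alpha,0]
\]
(for $\beta<-\alpha$ it equals the reciprocal of this, again $\ge1$), which as a function of $\beta$ attains its global minimum $1$ at $\beta=-\alpha$ and equals $(1+\alpha)/(1-\alpha)$ of \eqref{kappa:min} at $\beta=0$. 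Being increasing on $[-\alpha,0]$, $\kappa_{\min}$ minimized subject to $\beta\ge-\beta_0$ is attained at $\beta(\tmu)=\max(-\alpha,-\beta_0)$, giving $c_1^2(\tmu)=\kappa_{\min}(\beta(\tmu))$ and $c_2^2(\tmu)=\kappa_{\min}(\beta(\tmu))^{-1}$, as claimed in \eqref{up-bound}--\eqref{low-bound}. Consistency checks: at $\tmu=\mu$ (resp.\ $\tmu=\nu$) one has $r=0$, so $\beta_0=0$ and $\sqrt{\kappa}=\sqrt{(1+\alpha)/(1-\alpha)}$, in agreement with Theorem~\ref{thm:bestConst-2D}; at $\tmu=\mu_s$ (resp.\ $\tmu=\nu_s$) one should find $\beta_0=\alpha$, so $\PP=\II$ becomes admissible and $c_1=c_2=1$, matching the energy estimates — this amounts to expressing $\mu_s,\nu_s$, the extreme eigenvalues of $\CC_s$, in the $\alpha$‑coordinates.

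The main obstacle is the bookkeeping of the determinant constraint and the resulting piecewise formula for $\beta_0$ (the split \eqref{cond1}, \eqref{cond2}, which I expect to separate the genuinely complex eigenvalue pair from the real‑spectrum case), together with checking that the two opposite‑sign regimes of parts~(a) and~(b) really do collapse to one scalar optimization. Everything else is the same $2\times2$ spectral computation already carried out for Lemma~\ref{bestP-2D}.
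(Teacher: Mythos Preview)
Your argument is correct and follows essentially the same route as the paper's proof: reduce the relaxed matrix inequality $\CC^*\PP+\PP\CC\gtrless 2\tmu\PP$ to a determinant condition on $\BB$ in the $\WW$--basis, then minimize $\kappa(\PP)$ first in $b$ (optimal at $b=1$) and then in $\beta$ on $[-\beta_0,0]$, obtaining $\tilde\beta=\max(-\alpha,-\beta_0)$. Your presentation adds two nice touches the paper leaves implicit---the unified handling of parts (a) and (b) via the single scalar constraint $|\beta_\BB|/\sqrt{b_1b_2}\le\beta_0$, and the closed form $\kappa_{\min}(\beta)=\frac{(1+\alpha)(1+\beta)}{(1-\alpha)(1-\beta)}$ for $\beta\in[-\alpha,0]$ (which indeed simplifies \eqref{kappa-min})---though note two small slips in side remarks: the admissible set in (a) is nonempty for all $\tmu\le\mu$, not ``precisely'' on $[\mu_s,\mu]$, and the degenerate case \eqref{cond1} corresponds to $\lambda_1^\CC=\lambda_2^\CC$ with $\tmu=\mu$, not merely equal real parts.
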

\begin{proof}
\underline{Part (a):} For a fixed $\tmu\in[\mu_s,\mu]$ we have to determine the smallest constant $c_1$ for the estimate \eqref{up-bound},
 following the strategy of proof from \S\ref{sec:min}. 
To this end,
 we use a unitary transformation of the coordinate system and write $\PP(\tmu)=\WW\BB_u\WW^*$ with 
 \begin{equation} \label{matrices:W:Bu}
  \WW = \begin{pmatrix} 1 & \alpha \\ 0 & \sqrt{1-\alpha^2} \end{pmatrix} \,,\quad
  \BB_u = \begin{pmatrix} 1/b & \beta(\tmu) \\ \bar\beta(\tmu) & b \end{pmatrix} \,,
 \end{equation}
where we set w.l.o.g.\ $b_1=1/b$, $b_2=b$ with $b>0$. 
Moreover, $|\beta|^2<1$ has to hold. 
Now, we have to find the positive definite Hermitian matrix $\BB_u$,
 such that the analog of \eqref{matrixestimate2}, \eqref{matrixestimate2b} holds, i.e.:
\begin{equation}\label{matrixA}
  \AA:=\begin{pmatrix}
         2(\Re(\lambda_1^\CC)-\tmu)/b & (\bar\lambda_1^\CC+\lambda_2^\CC-2\tmu) \beta \\
         (\lambda_1^\CC+\bar\lambda_2^\CC-2\tmu) \bar\beta & 2(\Re(\lambda_2^\CC)-\tmu)b
        \end{pmatrix} \ge0\,,
\end{equation}
As in the proof of Lemma \ref{bestP-2D-diagonalB},
 we assume that the eigenvalues of $\CC$ are ordered as $\Re(\lambda_2^\CC)\ge \Re(\lambda_1^\CC)=\mu\ge\tmu$. 
Hence, $\trace\AA\ge0$. For the non-negativity of the determinant to hold, i.e.
\begin{equation}\label{detA}
  \det\AA=4\big(\Re(\lambda_1^\CC)-\tmu\big)\big(\Re(\lambda_2^\CC)-\tmu\big)-|\lambda_1^\CC+\bar\lambda_2^\CC-2\tmu|^2|\beta|^2 \ge0\,,
\end{equation}
we have the following restriction on $\beta$:
\begin{equation}\label{cond2}
  |\beta|^2\le\beta_0^2 := \frac{4\big(\Re(\lambda_1^\CC)-\tmu\big)\big(\Re(\lambda_2^\CC)-\tmu\big)}{|\lambda_1^\CC+\bar\lambda_2^\CC-2\tmu|^2}\,.
\end{equation}
If $\lambda_1^\CC+\bar\lambda_2^\CC-2\tmu=0$,
 we conclude $\lambda_1^\CC=\lambda_2^\CC$ and that we have chosen the sharp decay rate $\tmu=\mu$. 
As the associated, minimal condition number $\kappa(\PP)$ was already determined in Lemma \ref{bestP-2D},
 we shall not rediscuss this case here. 
But to include this case into the statement of the theorem, we set 
\begin{equation}\label{cond1}
  \beta_0:=1\,,\quad \mbox{if }\lambda_1^\CC=\lambda_2^\CC \mbox{ and }\tmu=\mu\,.
\end{equation}

{}From \eqref{cond2} we conclude that $\beta_0\in[0,1]$. 
Note that $\beta_0=1$ is only possible for $\tmu=\mu$ and $\lambda_1^\CC=\lambda_2^\CC$, i.e.\ the case that we just sorted out.
For the rest of the proof we hence assume that condition \eqref{cond2} holds with $\beta_0\in[0,1)$. 

For admissible matrices $\BB_u$ (i.e.\ with $b>0$ and $|\beta|\le\beta_0$) it remains to determine the matrix 
\begin{equation*} 
  \PP(b,\beta)=\WW\BB_u\WW^*
  = \begin{pmatrix}
            \tfrac1b + 2\alpha\Re\beta+ b\alpha^2 & \;\;(\beta+b\alpha) \sqrt{1-\alpha^2} \\
            (\bar\beta+b\alpha) \sqrt{1-\alpha^2} & b (1-\alpha^2)
           \end{pmatrix}\,,
\end{equation*}
(with $\WW$ and $\BB_u$ given in \eqref{matrices:W:Bu}), having the minimal condition number
$\kappa\big(\PP(b,\beta)\big) =\lambda^\PP_+(b,\beta) /\lambda^\PP_-(b,\beta)$. 
Here
 \[ \lambda_\pm^\PP(b,\beta) = \frac{\trace\PP(b,\beta)\pm \sqrt{(\trace\PP(b,\beta))^2 -4\det \PP(b,\beta)}}{2} \]
are the (positive) eigenvalues of $\PP(b,\beta)$.  

As a first step we shall minimize $\kappa\big(\PP(b,\beta)\big)$ w.r.t.\ $b$ (and for $\beta$ fixed),
 since $\argmin_{b>0} \kappa\big(\PP(b,\beta)\big)$ will turn out to be independent of $\beta$.
We notice that $\trace\PP(b,\beta) = b+ 2\alpha\Re\beta+1/b$ is a convex function of $b\in(0,\infty)$ 
which attains its minimum for $b=1$.
Moreover, $\det\PP(b,\beta)=(1-\alpha^2)(1-|\beta|^2)>0$ is independent of $b$.
This yields the condition number 
 \[
  \kappa_{\min} (\beta) =\frac{\lambda^\PP_+(1,\beta)}{\lambda^\PP_-(1,\beta)}
              =\frac{1 + \sqrt{1 -\frac{(1-\alpha^2)(1-|\beta|^2)}{(1+\alpha\Re\beta)^2}}}{1 - \sqrt{1 -\frac{(1-\alpha^2)(1-|\beta|^2)}{(1+\alpha\Re\beta)^2}}}\,.
 \]

As a second step we minimize $\kappa_{\min} (\beta)$ on the disk $|\beta|\le\beta_0$. To this end, the quotient $\frac{(1-\alpha^2)(1-|\beta|^2)}{(1+\alpha\Re\beta)^2}$ should be as large as possible. For any fixed $|\beta|\le\beta_0$, this happens by choosing $\beta=-|\beta|$, since $\alpha\in[0,1)$. Hence it remains to maximize the function $g(\beta):=\frac{1-\beta^2}{(1+\alpha\beta)^2}$ on the interval $[-\beta_0,0]$. It is elementary to verify that $g$ is maximal at $\tilde\beta:=\max(-\alpha,-\beta_0)$. Then, the minimal condition number is
\begin{equation} \label{kappa-min}
  \kappa_{\min} (\tilde\beta) =\kappa\big(\PP(1,\tilde\beta)\big)=
  \frac{1 + \sqrt{1 -\frac{(1-\alpha^2)(1-\tilde\beta^2)}{(1+\alpha\tilde\beta)^2}}}{1 - \sqrt{1 -\frac{(1-\alpha^2)(1-\tilde\beta^2)}{(1+\alpha\tilde\beta)^2}}}\,.
\end{equation}

\noindent
\underline{Part (b):} Since the proof of the lower bound is very similar to Part (a), we shall just sketch it.
For a fixed $\tmu\in[\nu,\nu_s]$ we have to determine the largest constant $c_2$ for the estimate \eqref{low-bound}. 
To this end we need to satisfy the inequality
$$
  \CC^*\PP+\PP\CC \le 2\tmu\PP
$$
with a positive definite Hermitian matrix $\PP$ with minimal condition number $\kappa(\PP)$. 
In analogy to \S\ref{sec:Lyap} this would imply 
$$
  \ddt \|f(t)\|^2_\PP  \geq -2\tmu \|f(t)\|^2_\PP\,, 
$$
and hence the desired lower bound
\begin{equation*} 
 \|f(t)\|^2_2\geq (\lambda^\PP_{\max})^{-1} \|f(t)\|^2_\PP \geq  (\lambda^\PP_{\max})^{-1} e^{-2\tmu t} \|f^I \|^2_\PP \geq  (\kappa(\PP))^{-1}\, e^{-2\tmu t} \|f^I \|^2_2 \,. 
\end{equation*}  

For minimizing $\kappa(\PP)$, we again use a unitary transformation of the coordinate system and write $\PP$ as
$\PP(\tmu)=\WW\BB_l\WW^*$, with $\WW$ from \eqref{matrices:W:Bu} and the positive definite Hermitian matrix
   $$
     \BB_l = \begin{pmatrix} 1/b & \beta(\tmu) \\ \bar\beta(\tmu) & b \end{pmatrix} \,,
   $$
with $b>0$ and $|\beta|^2<1$.
Then, the matrix $\AA$ from \eqref{matrixA} has to satisfy $\AA\le0$. Since we chose 
the eigenvalues of $\CC$ to be ordered as $\Re(\lambda_1^\CC)\le \Re(\lambda_2^\CC)=\nu\le\tmu$, we have $\trace\AA\le0$. 
The necessary non-negativity of its determinant again reads as \eqref{detA}. 

In the special case $\lambda_1^\CC+\bar\lambda_2^\CC-2\tmu=0$, we conclude again $\lambda_1^\CC=\lambda_2^\CC$ and $\tmu=\nu$. 
Hence $\AA=0$. Since $\beta$ is then only restricted by $|\beta|<1$, we can again set $\beta_0=1$ and obtain the minimal $\kappa(\PP)$ for $\tilde\beta(\nu)=-\alpha$, as in Part (a).

In the generic case, the minimal $\kappa(\PP)$ is obtained for $\tilde\beta=\max(-\alpha,-\beta_0)$ with $\beta_0$ given in \eqref{cond2}. 
Hence, the maximal constant in the lower bound \eqref{low-bound} is $c_2^2(\tilde\mu)=\kappa_{\min}(\tilde\beta)^{-1}$ where $\kappa_{\min}$ is given by \eqref{kappa-min}. 
This finishes the proof.
\qed
\end{proof}


We illustrate the results of Proposition~\ref{prop:5.1} with two examples.
\begin{example} \label{ex:ODE1}
We consider ODE~\eqref{ODE:general:n} with the matrix
\begin{equation*} 
 \CC = \begin{pmatrix} 1 & -1 \\ 1 & 0 \end{pmatrix}
\end{equation*}
which has eigenvalues $\lambda_\pm = (1\pm i\sqrt{3})/2$, and some normalized eigenvectors of~$\CC^*$ are, e.g.
\begin{equation}\label{ex1:eigenvectors}
 w_+ = \frac1{\sqrt2} \begin{pmatrix} -1 \\ \lambda_- \end{pmatrix}\,, \qquad
 w_- = \frac1{\sqrt2} \begin{pmatrix} -\lambda_- \\ 1 \end{pmatrix}\,.
\end{equation}
The optimal decay rate is $\mu=1/2$,
 whereas the minimal and maximal eigenvalues of $\CC_s$ are $\mu_s=0$ and $\nu_s=1$, respectively.
To bring the eigenvectors of $\CC^*$ in the canonical form used in the proof of Proposition~\ref{prop:5.1},
 we fix the eigenvector~$w_+$,
 and choose the unitary multiplicative factor for the second eigenvector~$w_-$ as in~\eqref{ex1:eigenvectors}
 such that $\langle w_+, w_- \rangle$ is a real number. 
Finally, we use the Gram-Schmidt process to obtain a new orthonormal basis
 such that the eigenvectors of $\CC^*$ in the new orthonormal basis are of the form~\eqref{eigenvectors:newBasis} with $\alpha=1/2$.
Then, the upper and lower bounds for the Euclidean norm of a solution of~\eqref{ODE:general:n} are plotted in Fig.~\ref{fig:estimate-of-norm} and Fig.~\ref{fig:estimate-of-norm+zoom}.
\begin{figure}[h!]
\includegraphics[width=\textwidth]{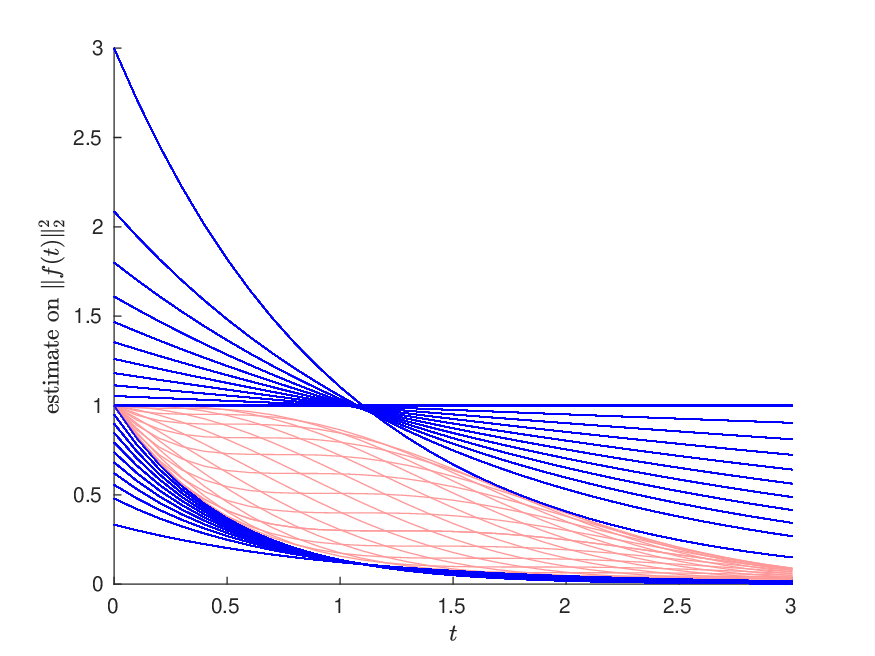}
\caption{
 The red (grey) curves are the squared norm of solutions $f(t)$ for ODE~\eqref{ODE:general:n}
 with matrix $\CC=[1,-1; 1,0]$ and various initial data~$f^I$ with norm~$1$.
 The blue (black) curves are the lower and upper bounds for the squared norm of solutions.
 Note: The curves are colored only in the electronic version of this article.
}
\label{fig:estimate-of-norm}
\end{figure}
\begin{figure}[h!]
\includegraphics[width=\textwidth]{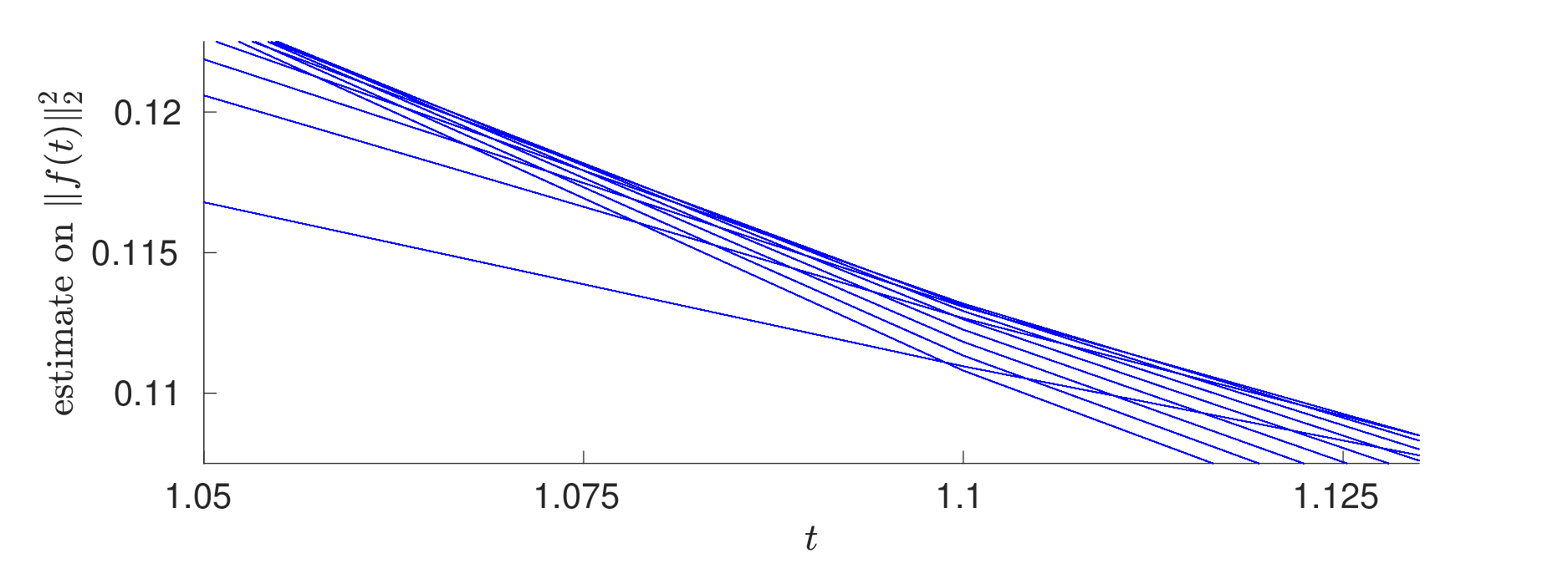}
\caption{
Zoom of Fig.~\ref{fig:estimate-of-norm}: The curves are the lower bounds for the squared norm of solutions for ODE~\eqref{ODE:general:n}
 with matrix $\CC=[1,-1; 1,0]$ and various initial data~$f^I$ with norm~$1$.
This plot shows that these lower bounds do not intersect in a single point.}
\label{fig:estimate-of-norm+zoom}
\end{figure}
For both the upper and lower bounds,
 the respective family of decay curves does \emph{not} intersect in a single point (see Fig.~\ref{fig:estimate-of-norm+zoom}).
Hence, the whole family of estimates provides a (slightly) better estimate on $\|f(t)\|_2$
 than if just considering the two extremal decay rates.
\tw{For the upper bound this means
 \[
  \|f(t)\|_2
   \leq \min_{\tilde\mu\in[\mu_s,\mu]} c_1(\tilde\mu)\, e^{-\tilde\mu t} \, \|f^I\|_2 
   \leq \min\{1\,, c_1(\mu)\, e^{-\mu t}\} \, \|f^I\|_2 \,,\quad t\ge0\,,
 \]
 and for the lower bound
 \[
  \|f(t)\|_2
   \geq \max_{\tilde\nu\in[\nu,\nu_s]} c_2(\tilde\nu)\, e^{-\tilde\nu t} \, \|f^I\|_2 
   \geq \max\{c_2(\nu)\, e^{-\nu t}\,, c_2(\nu_s)\, e^{-\nu_s t}\} \, \|f^I\|_2 \,,\ \ t\ge0\,.
 \]
}

Note that the upper bound $\sqrt3 e^{-t/2}$ with the sharp decay rate $\mu=\frac12$ carries the optimal multiplicative constant $c=\sqrt3$,
 as it touches the set of solutions (see Fig.~\ref{fig:estimate-of-norm}). 
But this is not true for the estimates with smaller decay rates (except of $\tmu=0$). 
\tw{The reason for this lack of sharpness is the fact that the inequality
 $\|f(t)\|^2_\PP \leq e^{-2\tilde\mu t} \|f^I \|^2_\PP$
 used in the proof of Proposition~\ref{prop:5.1} is, in general, 
 not an equality (in contrast to~\eqref{est:middle}). \qed}
 \end{example}

In the next example we consider a matrix $\CC\in\R^{2\times2}$ with $\Re\lambda_1\ne\Re\lambda_2$,
 which corresponds to the case analyzed in Theorem~\ref{thm:bestConst-2D:2}. 
For such cases the strategy of Proposition~\ref{prop:5.1} (based on minimizing $\kappa(\PP)$) could be improved in the spirit of Theorem~\ref{thm:bestConst-2D:2},
 but we shall not carry this out here. Hence, the estimates of the following example will not be sharp, see Fig.~\ref{fig:ex2:estimate-of-norm}.


\begin{example} \label{ex:ODE2}
We consider ODE~\eqref{ODE:general:n} with the matrix
\begin{equation*} 
 \CC = \begin{pmatrix} 19/20 & -3/10 \\ 3/10 & -1/20 \end{pmatrix}
\end{equation*}
which has the eigenvalues $\lambda_1=1/20$ and $\lambda_2=17/20$, 
and some normalized eigenvectors of $\CC^*$ are, e.g.
\begin{equation*}
 w_1 = \frac1{\sqrt{10}} \begin{pmatrix} 1 \\ -3 \end{pmatrix}\,, \qquad
 w_2 = \frac1{\sqrt{10}} \begin{pmatrix} 3 \\ -1 \end{pmatrix}\,.
\end{equation*}
The optimal decay rate is $\mu=1/20$, 
whereas the minimal and maximal eigenvalues of $\CC_s$ are $\mu_s=-1/20$ and $\nu_s=19/20$, 
 respectively.
Since the matrix $\CC$ and its eigenvalues are real valued,
 the eigenvectors of $\CC^*$ are already in the canonical form used in the Gram-Schmidt process to obtain a new orthogonal basis
 such that the eigenvectors of $\CC^*$ in the new basis are of the form~\eqref{eigenvectors:newBasis} with $\alpha=3/5$.
Then, the upper and lower bounds for the Euclidean norm of a solution of~\eqref{ODE:general:n} are plotted in Fig.~\ref{fig:ex2:estimate-of-norm}. 
Since $\mu_s<0$, solutions~$f(t)$ to this example may initially increase in norm. \qed
\begin{figure}[h!]
\includegraphics[width=\textwidth]{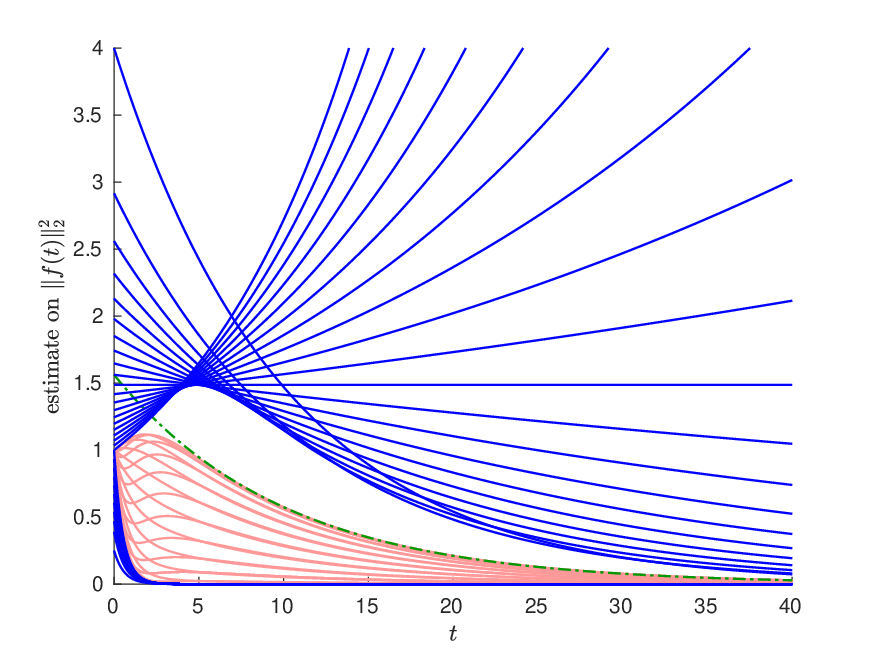}
\caption{
 The red (grey) curves are the squared norm of solutions $f(t)$ for ODE~\eqref{ODE:general:n}
 with matrix $\CC=[19/20,-3/10; 3/10,-1/20]$ 
 and various initial data~$f^I$ with norm~$1$.
 The blue (black) curves are the lower and upper bounds for the squared norm of solutions
  derived from Proposition~\ref{prop:5.1}.
 The green (black) \emph{dash-dotted curve} is the upper bound for the squared norm of solutions 
  derived from Theorem~\ref{thm:bestConst-2D:2}.
 Note: The curves are colored only in the electronic version of this article.
}
\label{fig:ex2:estimate-of-norm}
\end{figure}
\end{example}


\noindent
\textbf{Acknowledgments.}
All authors were supported by the FWF-funded SFB \#F65. 
The second author was partially supported by the FWF-doctoral school W1245 ``Dissipation and dispersion in nonlinear partial differential equations''.
We are grateful to the anonymous referee who led us to better distinguish the different cases studied in \S3 and \S4.

%
%

\bibliographystyle{spmpsci} 

\begin{thebibliography}{99}

\providecommand{\url}[1]{{#1}}
\providecommand{\urlprefix}{URL }
\expandafter\ifx\csname urlstyle\endcsname\relax
  \providecommand{\doi}[1]{DOI~\discretionary{}{}{}#1}\else
  \providecommand{\doi}{DOI~\discretionary{}{}{}\begingroup
  \urlstyle{rm}\Url}\fi

\bibitem{AAC16}
Achleitner, F., Arnold, A., Carlen, E.A.: 
On linear hypocoercive {BGK} models.
\newblock Gon\c{c}alves P., Soares A. (eds) From Particle Systems to Partial Differential Equations {III}.   \newblock Springer Proc. Math. Stat., vol 162, pp. 1--37. 
\newblock Springer, Cham (2016)

\bibitem{AAC17}
Achleitner, F., Arnold, A., Carlen, E.A.: 
On multi-dimensional hypocoercive {BGK} models.
\newblock Kinet. Relat. Models \textbf{11}, 953--1009 (2018) 

\bibitem{AAS} 
Achleitner, F., Arnold,  A., St{\"{u}}rzer, D.: 
Large-Time Behavior in Non-Symmetric Fokker-Planck Equations.
\newblock Riv. Math. Univ. Parma (N.S.) \textbf{6}, 1--68 (2015)

\bibitem{ArEr14}
{Arnold, A., Erb, J.:}
\newblock Sharp entropy decay for hypocoercive and non-symmetric {F}okker-{P}lanck equations with linear drift.
\newblock {arXiv preprint}, arXiv:1409.5425\/ (2014)


\bibitem{AJW18}
{Arnold, A., Jin, S., W\"ohrer, T.:}
\newblock Sharp Decay Estimates in Defective Evolution Equations: from {ODE}s to Kinetic {BGK} Equations.
\newblock {preprint}, (2018)


\bibitem{Arnold1978}
Arnold, V.I.: Ordinary differential equations.
\newblock MIT Press, Cambridge, Mass.-London (1978)

\bibitem{BGK}
Bhatnagar, P.L., Gross, E.P., Krook, M.: 
A Model for Collision Processes in Gases. 
I. Small Amplitude Processes in Charged and Neutral One-Component Systems.
\newblock Phys. Rev. \textbf{94}, 511--525 (1954) 

\bibitem{BoVa04}
Boyd, S.P., Vandenberghe, L.: 
Convex Optimization. 
Cambridge University Press, Cambridge (2004)

\bibitem{BrMo94}
Braatz, R.D., Morari, M.: 
Minimizing the {E}uclidean condition number.
\newblock {SIAM J. Control Optim.} \textbf{32}, 1763--1768 (1994) 

\bibitem{Bu68}
Businger, P.A.: 
Matrices which can be optimally scaled.
\newblock {Numer. Math.} \textbf{12}, 346--348 (1968)

\bibitem{DMS15}
Dolbeault, J., Mouhot, C., Schmeiser, C.: 
Hypocoercivity for linear kinetic equations conserving mass.
\newblock Trans. Amer. Math. Soc. \textbf{367}, 3807--3828 (2015) 



\bibitem{Ko2006}
Kolotilina, L.Yu.:
\newblock Solution of the problem of optimal diagonal scaling for quasireal Hermitian positive definite $3\times 3$ matrices.
\newblock J. Math. Sci. (N.Y.) \textbf{132}, 190--213 (2006) 


\tw{\vspace{-4mm}
\bibitem{MiMo2013}
Miclo, L., Monmarch{\'e}, P.:
\newblock {{\'E}tude spectrale minutieuse de processus moins ind{\'e}cis que les autres},
\newblock In: Donati-Martin C., Lejay A., Rouault A. (eds) {S{\'e}minaire de Probabilit{\'e}s XLV},
\newblock Lecture Notes in Mathematics, vol 2078, pp. 459--481.
\newblock Springer, Heidelberg (2013).
\newblock English summary available at \url{https://www.ljll.math.upmc.fr/~monmarche}
}

\bibitem{SeOv1990}
Sezginer, R.S., Overton, M.L.:
{The largest singular value of $e^X A_0 e^{-X}$ is convex on convex sets of commuting matrices},
\newblock IEEE Trans. Automat. Control \textbf{35}, 229--230 (1990) 

\bibitem{ViH06}
Villani, C.:
\newblock Hypocoercivity.
\newblock {Mem. Amer. Math. Soc.},
\textbf{202} (2009) 

\end{thebibliography}


\end{document}